\newtheorem{theorem}{Theorem}[section]
\newtheorem{lemma}[theorem]{Lemma}
\newtheorem{proposition}[theorem]{Proposition}
\newtheorem{corollary}[theorem]{Corollary}
\newtheorem*{theorem*}{Theorem}
\newtheorem{remark}[theorem]{Remark}
\def\as{\hbox{\rm a.s.{ }}}
\numberwithin{equation}{section}
\newtheorem{Definition}{Definition}[part]
\newtheorem{Remark}{Remark}[part]
\newtheorem{Theorem}{Theorem}[part]
\def \unb{\underbar}
\def \t{\tau}
\def \ind{1\!\!1}
\newcommand{\nc}{\newcommand}
\nc{\esssup}{\mathop{\mathrm{ess\,sup}}}
\nc{\essinf}{\mathop{\mathrm{ess\,inf}}}
\nc{\argmax}{\mathop{\mathrm{arg\,max}}}
\def \lb {\label}
\def \nd {\noindent}
\def \ms {\medskip}
\def \bs {\bigskip}
\def \P{\mathbb{P}}
\def \R{\mathbb{R}}
\def \E{\mathbb{E}}
\def \1{\mathbf{1}}
\def \Dc{\mathcal{D}}
\def \Fc{\mathcal{F}}
\def \Hc{\mathcal{H}}
\def \Pc{\mathcal{P}}
\def \Mc{\mathcal{M}}
\def \Sc{\mathcal{S}}
\def \Tc{\mathcal{T}}
\def \eps{\varepsilon}
\def \nn {\nonumber}
\def \ed {\end{document}}
\def \hy {\hat Y}
\def \hz {\hat Z}
\def \hk {\hat K}
\def\reff#1{{\rm(\ref{#1})}}
\def\enqs{\end{eqnarray*}}
\def\beq{\begin{eqnarray}}
\def\enq{\end{eqnarray}}
\def \und {\underline}
\def \sp {\mathcal{S}_{c}^{p}}
\def \ga {\gamma_1}
\def \gb {\gamma_2}
\def \d {\delta}
\begin{document}
\title[Mean-Field Reflected BSDEs]{Mean-field reflected backward stochastic differential equations}

\author{Boualem Djehiche, Romuald Elie and  Said Hamad{\`e}ne}

\address{Department of Mathematics \\ KTH Royal Institute of Technology \\ 100 44, Stockholm \\ Sweden}
\email{boualem@kth.se}
\address{LAMA UMR CNRS 8050, Université Paris-Est, Champs-sur-Marne, France}
\email{romuald.elie@univ-mlv.fr} 
\address{LMM, Le Mans University, Avenue Olivier Messiaen, 72085 Le Mans, Cedex 9, France}
\email{hamadene@univ-lemans.fr}
\thanks{{\bf Acknowledgements}. The first author gratefully acknowledges the financial support provided by the Swedish Research Council grant (2016-04086)}

\date{\today}

\subjclass[2010]{60H10, 60H07, 49N90}

\keywords{Mean-field, Backward SDEs, Snell envelope, penalization}

\begin{abstract}
In this paper, we study a class of reflected backward stochastic differential equations (BSDEs) of mean-field type, where the mean-field interaction in terms of the distribution of the $Y$-component of the solution enters in both the driver and the lower obstacle. We consider in details the case where the lower obstacle is a deterministic function of $(Y,\E[Y])$ and discuss the more general dependence on the distribution of $Y$. Under mild Lipschitz and integrability conditions on the coefficients, we obtain the well-posedness  of such a class of equations.  Under further monotonicity conditions, we show convergence of the  standard penalization scheme to the  solution of the equation, which hence satisfies a minimality property. This class of equations is motivated by applications in pricing life insurance contracts with surrender options.  
\end{abstract}

\maketitle

%\tableofcontents

%%%%%%%%%%%%%%%%%%%%%%%%%%%%%%%%%%%%%%%%%%%%%%%%%%%%%%%%%%%%%%
%\begin{document}
%\maketitle
%\begin{abstract}

%\end{abstract}
%\vskip 1cm AMS subject Classifications:
%\vskip 1cm\noindent {\bf Key words:}
\section{Introduction}

\medskip
Backward stochastic differential equations (BSDEs) have been extensively studied in a variety of context since the seminal paper by Pardoux and Peng \cite{parpeng90}. Much of the interest in BSDEs is due to the induced probabilistic representation of solutions to a large class of semilinear PDEs and stochastic control problems. Hereby, it constitutes a powerful tool for investigating several meaningful applications in engineering, investment science including mathematical finance, game theory and insurance, among many other areas.

Given a square integrable terminal condition $\xi$ and a Lipschitz continuous driver $f$, a solution to a typical BSDE consists of a pair of adapted processes $(Y,Z)$ defined on a filtered probability space
$(\Omega,\mathcal{F},(\mathcal{F}_t)_t,\P)$, on which is defined a Brownian motion $(B_t)_t$, which satisfy
$$
Y_t=\xi+\int_t^T f(s,Y_s,Z_s)ds-\int_t^T Z_sdB_s, \quad 0\le t\le T,
$$
where $Z$ is a control process which ensures that $Y$ may be expressed as a recursive utility function
$$
Y_t=E\left[\xi+\int_t^T f(s,Y_s,Z_s)ds\,|\,\mathcal{F}_t\right].
$$
In investment science (see e.g.\@ \cite{duffie1}, \cite{duffie2}, \cite{elkaroui2}), this formulation has the plausible interpretation of $Y$ as the yield of an investment under uncertainty, and $Z$ characterizes the optimal investment strategy. In Norberg \cite{norberg1}, \cite{norberg2}, $Y$ is interpreted as the prospective reserve of a life insurance contract for which the driver $f$ represents the payment process of contractual benefits less premiums payable during the time interval $dt$ and $\xi$ identifies to the terminal payment at the horizon $T$.

Given some constraint such as a minimum required amount $L_t$, a.k.a. solvency constraint, at each time $t$, one would like to find an optimal time $D_t$ after $t$ at which one can exit the investment so that  at any date,  the yield $Y$ is always kept larger than the constraint $L$:
\begin{equation}\label{reflec-0}
Y_t\ge L_t, \quad 0\le t\le T.
\end{equation}
This amounts to express the yield $Y$ as a solution of an optimal stopping problem:
\begin{equation*}
Y_t=\underset{\tau\,\,\mbox{stopping time\,\,}\ge t}{\esssup\,}\E[\int_t^\tau f(s,Y_s,Z_s)ds+L_{\tau}1_{\{\tau<T\}}+\xi1_{\{\tau=T\}}|F_t],\quad t\le T.
\end{equation*}
Intuitively, the smallest optimal time $D_t$ should be the first instant $s$ after $t$ where $Y_s$ reaches the constraint $L_s$:
$$
D_t=\inf\{s\ge t,\,\, Y_s=L_s\}\wedge T.
$$
El Karoui {\it et al.} \cite{elkaroui1} were the first to show that such a pair $(Y,Z)$ of adapted processes satisfies
\begin{equation}\label{rbsde-0}
Y_t = \xi + \int_t^T f(s,Y_s,Z_s)ds - \int_t^T Z_sdB_s + K_T-K_t, \quad 0\le t\le T,
\end{equation}
where the extra term $K$ is an adapted increasing process for which $K_T-K$ is the running cost for keeping $Y$ above $L$ at all times. In connection to optimal stopping problems, this cost must be  minimal in the sense that $K$ is only required to push the investment yield $Y$ above $L$, whenever it may cross it. Namely, whenever $Y_t>L_t$, there is no reason to stop at time $t$ so that $dK_t=0$. Hereby, the minimal solution of interest to \reff{rbsde-0} is uniquely characterized  by the famous Skorohod flatness condition for Snell enveloppes 
\begin{equation}\label{skoro-0}
\int_0^T (Y_t-L_t)^+dK_t=0.
\end{equation}
The dynamics \eqref{rbsde-0}-\eqref{reflec-0}-\eqref{skoro-0} is called reflected BSDE whose solution is the adapted process $(Y,Z,K)$. This class of constrained BSDEs has been widely extended in different directions, in relation to their possible connections to zero-sum games \cite{CviKar}, investment strategies with portfolio constraints \cite{CviKarSon}, switching problems \cite{HamJea,EliKha}, robust optimal stopping \cite{matoussi2013} and many others. We refer to the recent paper \cite{briandetal} for further discussions and references.

Motivated by considerations related to partial hedging of financial derivatives in mathematical finance,  Briand {\it et al.} \cite{briandetal} built on the weak hedging approach considered in \cite{BouEliRev} and introduce  a class of BSDEs where the pathwise running constraint $Y_t\ge L_t$ is replaced by a weaker constraint on the distribution of the yield $Y$ of the form
\begin{equation}\label{intro-0}
\E[\ell(Y_t-L_t)]\ge 0\;, \quad \quad 0\le t\le T,
\end{equation}
for a given loss function $\ell$. A typical situation is when the process $Y$ is required to remain above a certain benchmark (or solvency level) $u$ with a probability higher than some given level $v$ in which case $\ell:(t,x)\mapsto\ind_{\{x\ge u_t\}}-v_t$. For that reason, it is known in the literature as BSDE with mean reflection. 
Under Lipschitz and integrability conditions on $f$ and $\xi$, as well as regularity  and monotonicity of $\ell$, they obtain a unique solution $(Y,Z,K)$ with {\it deterministic} $K$ to the BSDE with mean reflection with the Skrohod type condition
\begin{equation}\label{skoro-0}
\int_0^T \E[\ell(Y_t-L_t)]^+dK_t=0.
\end{equation}
Observe that the constraint \reff{intro-0} is much weaker than the one considered in \cite{Dumetal}, where the constraint in expectation \eqref{intro-0} must be satisfied for any stopping time, although such distinction is not relevant for strong reflections of the form \reff{reflec-0}.\\
In life insurance and pension (see the example in Section 2.1 below), the lower barrier $L$, usually interpreted as a dynamical solvency level, is typically of the form
$$
L_t=u-c(Y_t)+\lambda(\E[Y_t]-u)^+\;, \quad \quad 0\le t\le T,
$$
where $u$ is a required minimum or a benchmark return, $c(Y_t)$ is a reserve dependent management fee and  $\lambda(\E[Y_t]-u)^+$ is a bonus option i.e. a fraction $\lambda$ of the possible surplus realized by the mean value of $Y_t$. This extra term typically reflects the cooperative aspects induced by the pooling principles of insurance policies.\\

Motivated by this example, the purpose of this paper is to study of
the following fairly general class of reflected BSDEs of mean-field
type
$$\left\{
\begin{array}{lll}
Y_t=\xi+\int_t^T f(s,Y_s,\P_{Y_s},Z_s)ds-\int_t^T Z_sdB_s+K_T-K_t, \quad 0\le t\le T,\\ \\
Y_t\ge h(Y_t, \P_{Y_t}),\quad \forall t\in [0,T] \,\,\mbox{ and }\,\, \int_0^T (Y_t-h(Y_t,\P_{Y_t}))dK_t = 0.
\end{array}
\right.
$$
where $\P_{Y_t}$ is the marginal probability distribution of the process $Y$ at time $t$. Mean Field BSDEs have been introduced in \cite{blp09} and motivated by their connection to control of McKean Vlasov equation or Mean Field games. While the addition of a reflection to these BSDEs has already been considered in \cite{Li14}, our point of interest here is to allow the obstacle to depend on the distribution of the $Y$-component of the solution. Observe also that this class of Mean Field RBSDE shares some possibly fruitful proximity with the notion of averaged obliquely reflected BSDE discussed in \cite{ChaRic}, whenever $\P_{Y_.}$ reduces to $\E[Y_.]$.\\

In Section 2, we provide a clear formulation of the problem  and introduce the required assumptions on the coefficients, while focusing for ease of presentation on the simpler case where the distributional dependence with respect to $\P_{Y}$ boils down to $\E[Y]$ and the driver does not depend on $Z$. 
In Section 3 we state and prove the main results of the paper, namely Theorem \ref{FP-Snell} and \ref{thmp1}, dedicated to the solvability of mean-field reflected BSDEs. The existence of a unique solution is derived in terms of a fixed point argument for the associated Snell envelope of processes, and the extension to more general dependence with respect to $\P_{Y}$  is provided in Remark \ref{marginal}. 
In Section 4, we use an alternative approach and show convergence of the classical penalization scheme for BSDEs  to the solution of mean-field reflected BSDEs, under further monotone assumptions on the driver $f$ and the solvency constraint $h$. Such monotone property also ensures that the Skorohod type condition indeed induces the minimality property of the solution.  The more involved case where the driver also depends on $Z$ is finally discussed in Section 5.

\medskip

\subsection*{Notation}
Let $(\Omega,\mathcal{F},\P)$ be a complete probability space on which is defined a standard $d$-dimensional Brownian motion $B=(B_{t})_{0\le t\leq T}$. We denote by $(\mathcal{F}_{t}^{0}:= \sigma\{B_{s}, s\leq t\})_{0\le t\leq T}$ the natural filtration of $B$ and $\mathbb{F}:=(\mathcal{F}_{t})_{0\le t\leq T}$ its completion with the $\P$-null sets of $\Fc$. Let $\Pc$ be the $\sigma$-algebra on $\Omega\times [0,T]$ of $\mathcal{F}_{t}$-progressively measurable sets. Next, we introduce the following spaces. For $p\ge 1$, we let
\medskip
\begin{itemize}
\item[(i)]  $L^{p}=\{\eta:  \mathcal{F}_{T}$-measurable random variable such that $\E[|\eta|^{p}]<\infty\}$;\\
\item[(ii)] $\mathcal{H}^{p,m}=\{(v_{t})_{0\leq t \le T}:$ $\Pc$-measurable, $\R^m$-valued process such that $\E[(\int_{0}^{T}|v_{s}|^{2}\,ds)^{\frac{p}{2}}]<\infty\}$ ($m\ge 1$);\\
\item[(iii)] $\mathcal{S}^{p}=\{{(y_t)}_{0 \leq t \leq T}:$  $\Pc$-measurable process  such that $\E[\sup\limits_{0\leq t\leq T}|y_{t}|^{p}]<\infty\}$;\\
\item[(iv)] $\mathcal{S}_{c}^{p}=\{(y_{t})_{0\leq t\leq T}:$ continuous process of
$\mathcal{S}^{p}$\} which is a complete separable space. For $0\le s\le t\le T$, we set $\|y\|_{\sp([s,t])}:=(\E[\sup\limits_{s\leq u\leq t}|y_{u}|^{p}])^ {\frac{1}{p}}$.\\
\item[(v)] $\mathcal{S}_{i}^{p}=\{(k_{t})_{0\leq t\leq T}:$ continuous non-decreasing process
of  $\mathcal{S}^{p}$ such that $k_{0}=0$\};\\
\item[(vi)]$\mathcal{T}_{t}=\{\nu, \,\,\mathcal{F}_{t}$-stopping time such that $\mathbb{P}$-a.s. $\nu\geq t\}$; \\
\item[(vii)] $\Dc:=\{ (y_{t})_{0\leq t\leq T}:\,\, \mathbb{F}$-adapted $\R$-valued continuous process such that \newline  $\|y\|_1=\sup_{\tau \in {\Tc}_0}\E[|y_\tau|]<\infty\}.$ \\
The normed space $(\Dc, \|\cdot\|_1)$ is complete (see e.g. \cite{dm}, pp.90). For $0\le s\le t \le T$, we let $(\Dc([s,t]),\|\cdot\|_1)$ denote the complete metric space endowed with the norm $\|\cdot\|_1$ restricted to $[s,t]$:
 $$
\|X\|_1:=\sup_{\tau \in {\Tc}_0,\,\, s\le\tau\le t}\E[|X_\tau|]<\infty.
$$

\item[(viii)] For $q\in (0,1)$, we define \newline
$\Mc^q=\{(v_{t})_{0\leq t \le T}: \Pc$-measurable, $\R^m$-valued process such that \newline  $\|v\|_{\Mc^q}:=\E\left[\left(\int_0^T|v_s|^2ds\right)^{\frac{q}{2}}\right]<\infty\}.$ \\ 
The space $(\Mc^q,\|\cdot\|_{\Mc^q})$ is a complete metric space. The restriction of $\Mc^q$ to $[s,t]$ is denoted by $\Mc^q([s,t])$.

\end{itemize}
\medskip
\section{Formulation of the problem}
\subsection{The class of reflected mean-field BSDEs}
In this paper we propose to find solutions $(Y,Z,K)$ to the following class of reflected BSDE of mean-field type associated with the driver $f$, the terminal condition $\xi$ and the lower barrier $h$, in the cases $p>1$ and $p=1$ respectively, that we make precise in the following
\begin{Definition} \label{rbsde} We  say that the triple of progressively measurable processes $(Y_{t},Z_{t},K_{t})_{t \leq T}$ is a solution of the mean-field reflected BSDE associated with $(f,\xi,h)$ if, 
\begin{itemize}
\item [(1)] when $p>1$,  
\begin{equation}
 \label{Eq1}\left\{
    \begin{array}{ll}
      Y\in \Sc^{p}_c,\, Z\in \Hc^{p,d} \,\,\mbox{ and }\,\, K\in \Sc_{i}^{p},\\
       \displaystyle Y_{t}=\xi+\int_{t}^{T}f(s,Y_{s},\E[Y_s])\,
ds+K_{T}-K_{t}-\int_{t}^{T}Z_{s}\,dB_{s},\quad 0\le t\leq T,\\
      Y_{t}\geq h(Y_t,\E[Y_t]),\,\,\,  \forall t \in[0,T] \,\,\mbox{ and }\,\, \int_0^T (Y_t-h(Y_t,\E[Y_t]))dK_t = 0.
    \end{array}
  \right.
\end{equation}
\item[(2)] when $p=1$,
\begin{equation}
 \label{Eq1-p1}\left\{
    \begin{array}{lll}
      Y\in \Dc, Z\in \underset{q\in (0,1)}{\cup}\Mc^{q}\,\,\mbox{ and }\,\, K\in \Sc_{i}^{1},\\
       Y_{t}=\xi+\int_{t}^{T}f(s,Y_{s},\E[Y_s])\,
ds+K_{T}-K_{t}-\int_{t}^{T}Z_{s}\,dB_{s},\quad 0\le t\le T, \\
      Y_{t}\geq h(Y_t,\E[Y_t]),\,\,\,  \forall t \in [0,T] \,\,\mbox{ and }\,\,\int_0^T (Y_t-h(Y_t,\E[Y_t]))dK_t = 0.
    \end{array}
  \right.
\end{equation}
\end{itemize}
\end{Definition}

\medskip
\noindent  In order to alleviate the presentation, we focus in this Section on the particular case where the driver does not depend on $Z$. We present the general study in Section \ref{general} below as this case needs more involved techniques and more restrictive conditions on the coefficients. 
\medskip
Similarly, the main results of the paper, Theorems \ref{FP-Snell} and \ref{thmp1} below, which establish the solvability of the systems \eqref{Eq1} and \eqref{Eq1-p1} respectively, remain valid if we replace the mean-field coupling $\E[Y_t]$ with the more general marginal law coupling $\P_{Y_t}$ of $Y_t$, i.e. when the driver and the obstacle are of the form $f(Y_t,\P_{Y_t})$ and $h(Y_t,\P_{Y_t})$ (see Remark \ref{marginal} for further details). In particular, by taking the barrier $h$ of the form $h(Y_t, \P_{Y_t}):=Y_t-\E[\ell(t,Y_t)]$, observe that we obtain the class of  BSDE with mean reflection with loss function $\ell$ considered in \cite{briandetal} for which the solution $Y$ satisfies $\E[\ell(t,Y_t)]\ge 0$.

\medskip
Following \cite{elkaroui1}, the solution $Y$ of the mean-field reflected BSDE in Definition \ref{rbsde}, if it exists, is the Snell envelope of the process $L:=(L_t)_{t\le T}$ where
$$
L_t:=\int_0^t f(s,Y_s,
\E[Y_s])ds+h(Y_t,\E[Y_s]_{s=t})1_{\{t<T\}}+\xi1_{\{t=T\}},
$$
given by
\begin{equation}\label{MF-Snell}
Y_t=\underset{\tau\in\mathcal{T}_t}{\esssup\,}\E[\int_t^\tau f(s,Y_s,
\E[Y_s])ds+h(Y_\tau,\E[Y_s]_{s=\tau})1_{\{\tau<T\}}+\xi1_{\{\tau=T\}}|F_t],\,\,t\le
T.
\end{equation}

Driver and obstacles of mean-field type are common in life insurance contracts. 
A typical example is the case where the benefits less premiums include a cost of capital fee which is proportional to the reserve. Considering payments which involve a mean-field coupling such as  $\E[Y]$  comes from a very common practice among actuaries to consider the so-called 'model points' method  which is some sort of averaging of large homogeneous portfolios when computing reserves and designing policies. We refer to  \cite{cdd14} and \cite{bl14} for further details on reserve-dependent policies in life insurance and pensions. Here is an example of such a class of contracts.
 
 \medskip
$\bullet\,$ {\bf Guaranteed life endowment with a surrender/withdrawal option}.
Consider a  portfolio of a large number $N$ of homogeneous life insurance policies $\ell$. Denote by $(Y^{\ell,N}, Z^{\ell,N})$ the characteristics of the prospective reserve of each policy $\ell=1,\ldots,N$. We consider nonlinear reserving where the driver $f$ depends on the reserve for the particular contract and on the average reserve characteristics over the $N$ contracts (since $N$ is very large, averaging over the remaining $N-1$ policies has roughly the same effect as averaging over all $N$ policies): For each $\ell=1,\ldots,N$,
\begin{equation}\label{MF-N}\begin{array}{lll}
f(t,Y^{\ell,N}_t,(Y^{m,N}_t)_{m\neq\ell}):=\alpha_t-\delta_tY_t+\beta_t\max(\theta_t, Y^{\ell,N}_t-\frac{1}{N}\sum_{k=1}^NY^{k,N}_t),\\
h(Y^{\ell,N}_t,(Y^{m,N}_t)_{m\neq\ell})=u-c(Y^{\ell,N}_t)+\mu(\frac{1}{N}\sum_{k=1}^NY^{k,N}_t-u)^+,
\end{array}
\end{equation}
where $0<\mu<1$. The driver includes the discount rate $\delta_t$ and  deterministic positive functions   
$\alpha_t, \beta_t$ and $\theta_t$ which constitute the elements of withdrawal option. The solvency level $h$ is constituted of a required minimum or a benchmark return $u$, a reserve dependent management fee  $c(Y^{\ell,N}_t)$ (usually much smaller than $u$) and  a bonus option $(\frac{1}{N}\sum_{k=1}^NY^{k,N}_t-u)^+$ which is the possible surplus realized by the average of all involved contracts, which reflects the cooperative aspect of the pool of insurance contracts.

\noindent Sending $N$ to infinity in \eqref{MF-N}, yields the following forms of the driver and the obstacle: 
\begin{equation*}\begin{array}{lll}
f(t,Y_t,\E[Y_t]):=\alpha_t-\delta_tY_t+\beta_t\max(\theta_t, Y_t- \E[Y_t]),\\
h(Y_t, \E[Y_t])=u-c(Y_t)+\mu(\E[Y_t]-u)^+,
\end{array}
\end{equation*}
of the prospective reserve of a representative life insurance contract, a.k.a. the model-point among actuaries.

Pricing this type of contracts is one of the main motivations of studying the class \eqref{Eq1} of MF-reflected BSDEs, while clarifying the connection between such BSDE and Mean Field Games of timing problems \cite{Ber} is left for further research.

\subsection{Assumptions on $(f,h,\xi)$} 
We make the following assumption on $(f,h,\xi)$.
  
\medskip
\underline{\bf Assumption (H1)}. The coefficients $f, h$ and $\xi$ satisfy 
\begin{itemize}
\item[(i)] $f$ is a mapping from $[0,T]\times \Omega\times \R^2$ into $\R$ such that
\begin{itemize}
\item[(a)] the process $(f(t,0,0))_{t\le T}$ is $\Pc$-measurable and belongs to $\mathcal{H}^{p,1}$;
\item[(b)] $f$ is Lipschitz w.r.t. $(y,y')$ uniformly in $(t,\omega)$, i.e., there exists a positive constant $C_f$ such that  $\P$-$\as$ for all $t\in [0,T]$, 
$$
|f(t,y_1,y'_1)-f(t, y_2,y'_2)|\le C_f(|y_1-y_2|+|y'_1-y'_2|).
$$
for any $y_1,y'_1,y_2,y'_2\in\R$.
\end{itemize}

\item[(ii)]  $h$ is a mapping from $\R^{2}$ into $\R$ which is Lipschitz w.r.t. $(y,y')$, i.e., there exist two positive constants $\gamma_1$ and $\gamma_2$ such that for any $x,y,x'$ and $y'$
 $$
 |h(x,x')-h(y,y')|\le \gamma_1|x-y|+\gamma_2|x'-y'|
 $$
 where $\gamma_1$ and $\gamma_2$ are two positive constants.

\item[(iii)]  $\xi$ is an $\Fc_T$-measurable, $\R$-valued r.v., $\E[\xi^p]<\infty$ and satisfies
  $\xi\ge h(\xi,\E[\xi])$.

\end{itemize}
\bigskip

\begin{remark}
Observe that Assumption (H1) only contains  classical Lipschitz and integrability conditions on the coefficients together with a natural condition ensuring that the terminal condition satisfies the constraint of interest. 
\end{remark}

Under Assumption (H1), we first derive existence and uniqueness of the solution of \eqref{Eq1} (for $p>1$) and \eqref{Eq1-p1} (for $ p=1$) based on a fixed point argument, using  the notion of Snell envelope of processes. Then, under further monotonicity assumptions on $(f,h)$,  we show that the classical penalization scheme converges to that solution together with a minimality property of the solution satisfying the Skorohod type condition. 

\section{Existence of a unique solution via the  Snell envelope method}

 This Section is dedicated to the construction of a unique solution to the MF-reflected BSDE \eqref{Eq1} under Assumption $(H1)$, studying successively the cases where $p>1$ and $p=1$. Such result will require an additional smallness condition on the Lipschitz regularity of the constraint, see Relation \eqref{gamma} on the coefficients $\ga$ and $\gb$ below together with the discussion in Remark \ref{gagbpetit}. In both cases, our argumentation follows from a fixed point property of the Snell Enveloppe representation of the solution on small time intervals, combined with a proper pasting of the solutions on small intervals into a global solution on $[0,T]$. These results are provided below in Theorem \ref{FP-Snell} for $p>1$ and Theorem \ref{thmp1} for $p=1$, while the more general case where the driver and the constraints depend at time $t$ on the marginal distribution of $Y_t$ is treated in Remark \ref{marginal}.\\

Let $\Phi$ be the mapping that associates to a process $Y$ another process $\Phi(Y)$ defined by
\begin{equation}\lb{defi}\Phi(Y)_t:=\underset{\tau\in\mathcal{T}_t}{\esssup\,}\E\left[\int_t^\tau f(s,Y_s,
\E[Y_s])ds+h(Y_\tau,\E[Y_s]_{s=\tau})1_{\{\tau<T\}}+\xi1_{\{\tau=T\}}|F_t\right],\,\,t\le
T.
\end{equation}

We will show that the map $\Phi$ is well defined and admits a fixed point, first considering the situation where $p>1$ and then turning to the case where $p=1$. 

\subsection{The case $p>1$} $ $ 

In this case, we will establish in this Section that the map $\Phi$ has a unique fixed point on the complete metric space $\sp$.

Let first observe in the next Lemma that $\Phi$ is a well-defined map from $\sp$ to itself. 
\begin{lemma}\label{well} Let $f$, $h$ and $\xi$ satisfy Assumption (H1) for some $p>1$. If $Y\in \sp$ then $\Phi(Y)\in \sp$.
\end{lemma}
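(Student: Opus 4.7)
I would rewrite $\Phi(Y)$ as a Snell envelope of an explicit adapted reward minus an absolutely continuous drift, and then verify separately the three defining properties of $\sp$. Setting
\[A_t:=\int_0^tf(s,Y_s,\E[Y_s])\,ds,\qquad V_t:=A_t+h(Y_t,\E[Y_t])\ind_{\{t<T\}}+(A_T+\xi)\ind_{\{t=T\}},\]
the definition \eqref{defi} rewrites as
\[\Phi(Y)_t=S_t-A_t,\qquad S_t:=\esssup_{\tau\in\Tc_t}\E[V_\tau\mid\Fc_t].\]
Since $A$ is continuous and adapted, showing $\Phi(Y)\in\sp$ reduces to showing that the Snell envelope $S$ is adapted, continuous and satisfies $\E[\sup_{t\le T}|S_t|^p]<\infty$.

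\textbf{Integrability and adaptedness.} Combining the Lipschitz bounds (H1)(i)(b) and (H1)(ii) with $Y\in\sp$, $f(\cdot,0,0)\in\Hc^{p,1}$ and $\xi\in L^p$, I would derive the a.s.\@ estimate
\[\sup_{0\le t\le T}|V_t|\le \int_0^T|f(s,0,0)|\,ds+|h(0,0)|+(2C_fT+\ga+\gb)\sup_{0\le s\le T}|Y_s|+|\xi|\in L^p,\]
which lives in $L^p$ via Cauchy--Schwarz for the $f(\cdot,0,0)$ term. Since $|S_t|\le\E[\sup_s|V_s|\mid\Fc_t]$, Doob's $L^p$-maximal inequality (using $p>1$) gives $\E[\sup_t|S_t|^p]<\infty$, and adaptedness of the Snell envelope is standard.

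\textbf{Continuity.} Continuity of $Y$ together with the integrability $\sup_s|Y_s|\in L^1$ and dominated convergence show that $t\mapsto\E[Y_t]$ is continuous, so the Lipschitz property of $h$ and the absolute continuity of $A$ make $V$ continuous on $[0,T)$. Classical Snell envelope theory (cf.\@ El Karoui et al.\@ \cite{elkaroui1}) then yields continuity of $S$ on $[0,T)$. At $t=T$ we have $\Phi(Y)_T=\xi$ by construction, and continuity up to $T$ is equivalent to the upward-jump condition $V_{T-}\le V_T$, i.e.\@ to $\xi\ge h(Y_T,\E[Y_T])$.

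\textbf{The main obstacle} is precisely this last boundary compatibility at $T$. Assumption (H1)(iii) only guarantees $\xi\ge h(\xi,\E[\xi])$, which is the required inequality when $Y_T=\xi$ but not in general. The natural way around this is to restrict $\Phi$ to the closed subspace $\{Y\in\sp:Y_T=\xi\}$; because $\Phi(Y)_T=\xi$ holds by construction of the Snell envelope, $\Phi$ maps this subspace into itself, and (H1)(iii) then supplies exactly the upward-jump inequality needed for continuity up to $T$. This is the natural setting for the subsequent fixed-point argument for \eqref{Eq1}.
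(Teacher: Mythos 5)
Your argument for the $L^p$-bound is essentially the paper's: both of you absorb the drift into the reward, dominate the resulting Snell envelope by the conditional expectation of a single $L^p$ random variable built from $\int_0^T|f(s,0,0)|ds$, $|h(0,0)|$, $\sup_s|Y_s|$ and $|\xi|$, and conclude with Doob's maximal inequality (the paper phrases the Lipschitz bounds through a linearization of $f$ and $h$ with coefficients bounded by $C_f,\ga,\gb$, but this is only used to produce the same estimate you write directly). Where you go beyond the paper is the continuity of $\Phi(Y)$: the paper's proof of this lemma only establishes $\E[\sup_t|\Phi(Y)_t|^p]<\infty$ and is silent on why $\Phi(Y)$ is continuous, even though $\sp=\Sc^p_c$ consists of continuous processes. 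Your observation that continuity at $T$ requires $\xi\ge h(Y_T,\E[Y_T])$ — which (H1)(iii) only delivers when $Y_T=\xi$ — is correct, and your fix of restricting $\Phi$ to the $\Phi$-invariant closed subset $\{Y\in\sp: Y_T=\xi\}$ is exactly the implicit convention under which the subsequent fixed-point argument operates. So the proposal is correct, follows the same route for the quantitative estimate, and in addition repairs a small imprecision in the statement of the lemma.
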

\begin{proof} Set, for $t\le T$,
$$\begin{array}{c}
L_t:=\int_0^t f(s,Y_s,
\E[Y_s])ds+h(Y_t,\E[Y_s]_{s=t})1_{\{t<T\}}+\xi1_{\{t=T\}}.\end{array}
$$
By construction, observe that 
\begin{equation}\label{envsnell}\begin{array}{c}
\Phi(Y)_t+ \int_0^tf(s,Y_s, \E[Y_s])ds =\underset{\tau\in \mathcal{T}_t}{\esssup\,}\E[L_{\tau}|\Fc_t]\;, \qquad 0\le t \le T\,.\end{array}
\end{equation} 
For $s\le T$, we linearize the mappings $f$ and $h$ as follows:
$$
\begin{array}{lll}
f(s,Y_s,\E[Y_s])=f(s,0,0)+a_f(s)Y_s+b_f(s)\E[Y_s],\\ h(Y_s,\E[Y_s])=h(0,0)+a_h(s)Y_s+b_h(s)\E[Y_s]
\end{array}
$$
where $a_f(\cdot), b_f(\cdot), a_h(\cdot)$ and $b_h(\cdot)$ are the adapted processes defined, for any $s\le T$, by
\begin{equation}\label{linearize}\left\{\begin{array}{lll}
a_f(s):=\frac{f(s,Y_s,\E[Y_s])-f(s,0,\E[Y_s])}{Y_s}\ind_{\{Y_s\neq 0\}},\quad a_h(s):=\frac{h(Y_s,\E[Y_s])-h(0,\E[Y_s])}{Y_s}\ind_{\{Y_s\neq 0\}}.
\\ \\  b_f(s):=\frac{f(s,0,\E[Y_s])-f(s,0,0)}{\E[Y_s]}\ind_{\{\E[Y_s]\neq 0\}},\quad b_h(s):=\frac{h(0,\E[Y_s])-h(0,0)}{\E[Y_s]}\ind_{\{\E[Y_s]\neq 0\}}.
\end{array}
\right.
\end{equation}
In view of the Lipschitz continuity of $f$ and $h$ provided by Assumption (H1), we have 

 $$
\max(|a_f(.)|,|b_f(.)|)\le C_f,\,\,\, |a_h(.)|\le \gamma_1,\,\,\, |b_h(.)|\le \gamma_2.
$$
We have, for any $(\Fc_t)$-stopping time $\nu$,
$$
\begin{array}{lll}
L_{\nu}=\int_0^\nu (f(s,0,0)+a_f(s)Y_s+b_f(s)\E[Y_s])ds\\ \qquad\qquad +
(h(0,0)+a_h(\nu)Y_\nu+b_h(\nu)\E[Y_s]_{s=\nu})1_{\{\nu<T\}}+\xi1_{\{\nu=T\}}.
 \end{array}
$$
Moreover, since $Y\in\sp$, the process $(M_t)_{0\le t\le T}$ defined by 
$$
\begin{array}{lll}
M_t:=\E\left[\int_0^T|f(s,0,0)|+C_f(|Y_s|+\E[|Y_s|])ds+
|h(0,0)|+\gamma_1\sup_{s\le T}|Y_s| \right. \\ \left. \qquad\qquad\qquad +\gamma_2\sup_{s\le
T}\E[|Y_s|]+|\xi||F_t\right],
\end{array}
$$
is a continuous martingale which, by Doob's inequality, belongs to $\sp$. We deduce that 
$$\begin{array}{lll}
|\E[L_{\tau}|F_t]|\le M_t.
 \end{array}
$$
for any $t\le T$ and $\tau\in\mathcal{T}_t$ and, in view of  \eqref{envsnell}, obtain
$$
|\Phi(Y)_t+\int_0^tf(s,Y_s,\E[Y_s])ds|\le M_t.
$$
Therefore,  
$$ 
\E[\sup_{t\le T}|\Phi(Y)_t|^p]\le
C_p\left\{\E\left[\left(\int_0^T|f(s,Y_s,\E[Y_s])|ds\right)^p\right]+\E[\sup_{t\le
T}|M_t|^p]\right\},$$
where $C_p$ is a positive constant that only depends on $p$ and $T$. Using once more the fact that $Y\in \sp$ and the Lipschitz property of the driver $f$, it finally holds that $\Phi(Y) \in \sp$. 
\end{proof}

We are now in position to obtain a contraction property of the map $\Phi$ on $\sp$ and first derive such property on a small time horizon.

\begin{proposition}\label{delta} Let Assumption (H1) hold for some $p>1$. If $\ga$ and $\gb$ satisfy 
\begin{equation*}
(\ga+\gb)^{\frac{p-1}{p}} \{(\frac{p}{p-1})^p\ga+\gb\}^{\frac{1}{p}}<1,
\end{equation*}
then there
exists $\delta >0 $ depending only on $p$, $C_f,\ga,\gb$ and $T$ such that $\Phi$
is a contraction on the time interval $[T-\delta, T]$.
\end{proposition}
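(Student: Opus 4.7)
To establish the contraction of $\Phi$ on $\sp([T-\delta,T])$ for $\delta$ small, pick two processes $Y,Y'\in\sp$ sharing the terminal value $\xi$, and set $\Delta Y:=Y-Y'$, $\Delta\bar Y_s:=\E[Y_s]-\E[Y'_s]$, $\Delta\Phi_t:=\Phi(Y)_t-\Phi(Y')_t$ and $D:=\|\Delta Y\|_{\sp([T-\delta,T])}$. The general inequality $|\esssup a_\tau-\esssup b_\tau|\le\esssup|a_\tau-b_\tau|$ applied to \eqref{defi}, together with the Lipschitz bounds on $f$ and $h$ and the cancellation of the $\xi$-terms at $\tau=T$, yields for any $t\in[T-\delta,T]$
\[
|\Delta\Phi_t|\le \esssup_{\tau\in\Tc_t}\E\Big[C_f\int_t^\tau(|\Delta Y_s|+|\Delta\bar Y_s|)ds+\ga|\Delta Y_\tau|+\gb|\Delta\bar Y_\tau|\,\Big|\,\Fc_t\Big].
\]
The decisive structural remark is that $s\mapsto|\Delta\bar Y_s|$ is a deterministic function while $(|\Delta Y_s|)_s$ is random, and these two terms must be treated asymmetrically.

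\textbf{Doob for the random part, nothing for the deterministic part.} Set $A:=C_f\int_{T-\delta}^T(|\Delta Y_s|+|\Delta\bar Y_s|)ds$ and $G:=\sup_{s\le T}|\Delta\bar Y_s|$, noting $G\le D$ and $\|A\|_p\le 2C_f\delta\,D$ by Jensen. Bounding $\int_t^\tau\le A$, $|\Delta Y_\tau|\le \sup_s|\Delta Y_s|$ and $|\Delta\bar Y_\tau|\le G$ gives $|\Delta\Phi_t|\le \E[A|\Fc_t]+\ga\,\E[\sup_s|\Delta Y_s|\,|\Fc_t]+\gb G$. Taking the supremum in $t$, we get $\sup_t|\Delta\Phi_t|\le S+\gb G$, where $S$ is the sum of the two Doob maxima. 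Doob's $L^p$ maximal inequality (applicable only because these are martingales) combined with Minkowski yields $\|S\|_p\le \frac{p}{p-1}(2C_f\delta+\ga)D$, whereas $\gb G\le \gb D$ is deterministic and thus pays \emph{no} Doob penalty.

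\textbf{Sharp recombination.} Applying the weighted Jensen inequality $(a+b)^p\le (u+v)^{p-1}(u^{1-p}a^p+v^{1-p}b^p)$, valid for $a,b\ge 0$ and $u,v>0$ (immediate from Jensen with the probability weights $u/(u+v)$, $v/(u+v)$), to $a=S$, $b=\gb G$ and $(u,v)=(\ga,\gb)$, then taking expectations, produces
\[
\|\Delta\Phi\|_{\sp([T-\delta,T])}^p\le (\ga+\gb)^{p-1}\Big\{\ga^{1-p}\Big(\tfrac{p}{p-1}\Big)^p(2C_f\delta+\ga)^p+\gb\Big\}D^p.
\]
Taking $p$-th roots, the prefactor converges as $\delta\downarrow 0$ to $(\ga+\gb)^{(p-1)/p}\{(p/(p-1))^p\ga+\gb\}^{1/p}$, which is $<1$ by hypothesis. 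A continuity argument then provides $\delta>0$, depending only on $p, C_f, \ga, \gb, T$, for which the prefactor remains $<1$, yielding the claimed contraction. The main obstacle is precisely this last step: a plain Minkowski bound produces the coarser (and in general suboptimal) constant $\frac{p}{p-1}\ga+\gb$, and only the weighted Jensen trick with weights $(\ga,\gb)$ isolates the Doob penalty on the random obstacle $\ga|\Delta Y|$ while leaving the deterministic mean-field obstacle $\gb|\Delta\bar Y|$ un-penalized, matching the exact Hölder-type quantity appearing in the assumption.
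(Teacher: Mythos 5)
Your proof is correct and follows essentially the same route as the paper's: bound the difference of essential suprema by the essential supremum of the difference, apply Doob's maximal inequality only to the conditional-expectation (genuinely random) terms while leaving the deterministic mean-field term unpenalized, and recombine via the weighted convexity/Jensen inequality so that the prefactor tends to exactly $(\ga+\gb)^{\frac{p-1}{p}}\{(\tfrac{p}{p-1})^p\ga+\gb\}^{\frac{1}{p}}$ as $\delta\downarrow 0$. The only cosmetic difference is that the paper applies the convexity inequality pointwise with weights $(\delta C_f+\ga,\delta C_f+\gb)$ rather than at the level of $L^p$-norms with weights $(\ga,\gb)$; your choice would degenerate if $\ga=0$ because of the factor $\ga^{1-p}$, but (H1)(ii) takes $\ga,\gb>0$, so this is immaterial here.
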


\begin{proof} Let $Y,Y'\in \sp$. Then, for any $t\leq T$, we have
\begin{equation*}\begin{array} {lll}
|\Phi(Y)_t-\Phi(Y')_t|=|\underset{\tau\in\mathcal{T}_t}{\esssup\,}\E[\int_t^\tau
f(s,Y_s,
\E[Y_s])ds+h(Y_\tau,\E[Y_s]_{s=\tau})1_{\{\tau<T\}}+\xi1_{\{\tau=T\}}|\Fc_t]
 \\ \qquad \qquad\qquad\qquad\qquad -\,\,\underset{\tau\in\mathcal{T}_t}{\esssup\,}\E[\int_t^\tau f(s,Y'_s,
\E[Y'_s])ds+h(Y'_\tau,\E[Y'_s]_{s=\tau})1_{\{\tau<T\}}+\xi1_{\{\tau=T\}}|\Fc_t]|
\\  \qquad \qquad\qquad\qquad \le \underset{\tau\in\mathcal{T}_t}{\esssup\,}\E[\int_t^\tau |f(s,Y_s,
\E[Y_s])-f(s,Y'_s, \E[Y'_s])|ds \\ \qquad\qquad\qquad\qquad \qquad\qquad\qquad\qquad \qquad\qquad +|h(Y_\tau,\E[Y_s]_{s=\tau})-h(Y'_\tau,\E[Y'_s]_{s=\tau})|
|\Fc_t].
 \end{array}
\end{equation*}
Therefore, for any $t\le T$,
\begin{equation}\label{ineqevsn}\begin{array} {ll}
|\Phi(Y)_t-\Phi(Y')_t|\le \E[\int_t^T|f(s,Y_s, \E[Y_s])-f(s,Y'_s,
\E[Y'_s])|ds \\ \qquad \qquad\qquad \qquad\qquad\qquad +\ga \underset{t\le s\le T}{\sup\,}|Y_s-Y'_s|+ \gb \underset{t\le s\le T}{\sup\,}\E[|Y_s-Y'_s|] |\Fc_t].
 \end{array}
\end{equation}
Next, let $\d >0$ and $t\in [T-\d,T]$. From \eqref{ineqevsn} and
the Lipschitz property of $f$ we obtain
\begin{equation*}\label{ineqevsn2}\begin{array} {ll}
|\Phi(Y)_t-\Phi(Y')_t|\le (\d C_f+\ga)\E[\underset{T-\d \le s\le
T}{\sup}|Y_s-Y'_s|)|\Fc_t]+ (\d C_f+\gb)\underset{T-\d \le s\le
T}{\sup}\E[|Y_s-Y'_s|]
 \end{array}
\end{equation*}
which implies that 
\begin{align*}\label{ineqevsn2}\begin{array}{lll}
|\Phi(Y)_t-\Phi(Y')_t|^p\le (2\d C_f+\ga+\gb)^{p-1}\{(\d C_f+\ga)(\E[\underset{T-\d \le s\le
T}{\sup}|Y_s-Y'_s|)|\Fc_t])^p \\ \qquad\qquad\qquad\qquad\quad +(\d C_f+\gb)(\underset{T-\d \le s\le
T}{\sup}\E[|Y_s-Y'_s|])^p\},
\end{array}
\end{align*}
since the convexity relation $(a x_1+b x_2)^p\le (a +b)^{p-1}(a x_1^p+b x_2^p)$ holds for any non negative real constants $a$, $b$, $x_1$ and $x_2$.
Taking the expectation of the supremum over $t\in [T-\d,T]$ on both sides, and using Doob's inequality (only with one term, the other one is deterministic) together with Jensen's inequality, we obtain 
\begin{align*}
&\E[\underset{T-\d\le s\le T}{\sup\,}|\Phi(Y)_s-\Phi(Y')_s|^p] \\&\qquad \le(2\d C_f+\ga+\gb)^{p-1} \{(\frac{p}{p-1})^p(\d C_f+\ga)\E[\underset{T-\d \le s\le
T}{\sup}|Y_s-Y'_s|^p]+ \\&\qquad \qquad \qquad \qquad +(\d C_f+\gb))\E[\underset{T-\d \le s\le
T}{\sup}|Y_s-Y'_s|^p]\}.
 \end{align*}
Hence,
\begin{equation}\label{ineqevsn2}\begin{array} {lll}
\|\Phi(Y)-\Phi(Y')\|_{\sp (\d, T)}\le \Lambda(C_f,p,\ga,\gb)\times \|Y-Y'\|_{\sp(\d,T)},
 \end{array}
\end{equation}
where $\sp(\d,T):=\sp([T-\d, T])$ is the space $\sp$ restricted to 
$[T-\d, T]$ and 
$$
\Lambda(C_f,p,\ga,\gb)(\delta):=(2\d C_f+\ga+\gb)^{\frac{p-1}{p}} \{(\frac{p}{p-1})^p(\d C_f+\ga)+(\d C_f+\gb)\}^{\frac{1}{p}}.
$$
\noindent Now, it is easy to see that if $$(\ga+\gb)^{\frac{p-1}{p}} \{(\frac{p}{p-1})^p\ga+\gb\}^{\frac{1}{p}}<1\,,$$
we can find $\delta>0$ which only depends on $C_f$, $p$, $\ga$ and
$\gb$ and more specifically not on the terminal value $\xi$ such that
\begin{equation}\label{defdelta}
\Lambda(C_f,p,\ga,\gb)(\delta)<1\;.
\end{equation}
This directly implies that $\Phi$ is a contraction on $\sp {([T-\d, T])}$ and hereby has a fixed point $Y$ which satisfies, for any $t\in
[T-\d,T]$,
\begin{equation}\label{eqsnell}Y_t=\underset{\tau\in\mathcal{T}_t}{\esssup\,}\E[\int_t^\tau f(s,Y_s,
\E[Y_s])ds+h(Y_\tau,\E[Y_s]_{s=\tau})1_{\{\tau<T\}}+\xi1_{\{\tau=T\}}|\Fc_t].
\end{equation}
\end{proof}

%%%%%%%%%%%%%%%%%%%%Main Theorem p>1%%%%%%%%%%%%%%%%%%%%%%%%%%%%%%%%%%%%%%%%%
 By repeatedly applying the fixed point argument of Proposition \eqref{delta} over adjacent time intervals of fixed length  $\delta$ and pasting the solutions,  we finally obtain existence of a unique solution in $\sp\times \mathcal{H}^{p,d}\times \mathcal{S}^p_i$  to the mean-field reflected BSDE \eqref{Eq1} over the whole time interval $[0,T]$, as shown in the following theorem. 
  
\begin{Theorem}\label{FP-Snell} Suppose that Assumption (H1) holds for some $p>1$. If
$\ga$ and $\gb$ satisfy
\begin{equation}\label{gamma}
(\ga+\gb)^{\frac{p-1}{p}} \{(\frac{p}{p-1})^p\ga+\gb\}^{\frac{1}{p}}<1,
\end{equation}
then the mean-field reflected BSDE \eqref{Eq1} has a unique solution $(Y,Z,K)\in \sp\times \mathcal{H}^{p,d}\times \mathcal{S}^p_i $.
\end{Theorem}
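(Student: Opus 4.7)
The strategy is to combine the local contraction result of Proposition \ref{delta} with classical reflected BSDE theory to build the solution piece by piece from $T$ backwards. On the interval $[T-\d,T]$, Proposition \ref{delta} produces (under the smallness condition \eqref{gamma}) a unique fixed point $Y\in\sp([T-\d,T])$ of $\Phi$, which by \eqref{eqsnell} is the Snell envelope representation associated to the mean-field data. The key point is that, once $Y$ is fixed, the \emph{frozen} driver $\tilde f(s):=f(s,Y_s,\E[Y_s])$ belongs to $\Hc^{p,1}$ (thanks to the Lipschitz bound and $Y\in\sp$) and the \emph{frozen} obstacle $\tilde h(s):=h(Y_s,\E[Y_s])$ is continuous and in $\sp$. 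Applying the classical existence/uniqueness result of El Karoui \emph{et al.} \cite{elkaroui1} for reflected BSDEs with data $(\tilde f,\tilde h,\xi)$ on $[T-\d,T]$ produces $(\bar Y,Z,K)\in\sp([T-\d,T])\times \Hc^{p,d}([T-\d,T])\times \Sc^p_i([T-\d,T])$. Since the $Y$-component of that solution is characterized as the Snell envelope of exactly the same auxiliary process, $\bar Y=Y$, so $(Y,Z,K)$ solves the system \eqref{Eq1} on $[T-\d,T]$.

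Next I would iterate backwards. The crucial observation is that the $\d$ furnished by Proposition \ref{delta} depends only on $p,C_f,\ga,\gb$, and \emph{not} on the terminal data. The new terminal value $Y_{T-\d}$ is $\Fc_{T-\d}$-measurable, belongs to $L^p$ (because $Y\in\sp$), and satisfies the barrier inequality $Y_{T-\d}\ge h(Y_{T-\d},\E[Y_{T-\d}])$ as a direct consequence of the Snell envelope property. Hence Assumption (H1) is preserved at $T-\d$ and Proposition \ref{delta} applies verbatim on $[T-2\d,T-\d]$; a finite number $N=\lceil T/\d\rceil$ of such steps covers $[0,T]$. Pasting the local solutions at the endpoints $T-k\d$ yields a triple $(Y,Z,K)\in\sp\times \Hc^{p,d}\times \Sc^p_i$ that is continuous (by continuity of each piece and matching of the endpoint values), satisfies the forward integral identity on each sub-interval and hence globally, stays above $h(Y,\E[Y])$ everywhere, and whose increasing process $K$ satisfies the Skorohod condition piecewise, thus on $[0,T]$.

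Uniqueness is established by the same backwards sweep. If $(Y^1,Z^1,K^1)$ and $(Y^2,Z^2,K^2)$ are two solutions, then restricted to $[T-\d,T]$ each of $Y^1,Y^2$ is a fixed point of $\Phi$ by the Snell envelope characterization used in \eqref{MF-Snell}; the contraction in \eqref{ineqevsn2} together with \eqref{defdelta} forces $Y^1=Y^2$ on $[T-\d,T]$, and then the uniqueness part of the classical RBSDE theory applied to the (now identical) frozen data gives $Z^1=Z^2$ and $K^1=K^2$ on that interval. Using $Y^1_{T-\d}=Y^2_{T-\d}$ as a common terminal condition, the argument is iterated backwards on $[T-2\d,T-\d]$, and so on until the origin. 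The main technical point that requires care is verifying at each step that the frozen data $(\tilde f,\tilde h)$ meet the integrability hypotheses of the classical RBSDE theorem and that the pasted processes inherit the $\sp\times\Hc^{p,d}\times\Sc^p_i$ regularity; both follow from the Lipschitz continuity of $f,h$ together with $Y\in\sp$ and a straightforward application of BDG-type estimates summed over the $N$ intervals.
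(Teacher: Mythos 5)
Your proposal is correct and follows essentially the same route as the paper: a local fixed point of $\Phi$ on $[T-\d,T]$ via Proposition \ref{delta}, identification of the fixed point with the $Y$-component of a classical reflected BSDE through the Snell envelope link of El Karoui \emph{et al.}, backward iteration using that $\d$ is independent of the terminal data, and uniqueness via the Snell envelope representation forcing any solution to be the fixed point on each sub-interval. Your explicit check that the new terminal value $Y_{T-\d}$ inherits the barrier compatibility $Y_{T-\d}\ge h(Y_{T-\d},\E[Y_{T-\d}])$ is a detail the paper leaves implicit, but otherwise the arguments coincide.
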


\begin{proof} According to the previous argumentation, recall that the constant $\d$ satisfying the inequality \eqref{defdelta} only depends on $p$, $C_f$, $\ga$ and $\gb$. Hence, the fixed point argument in Proposition \ref{delta} can be applied  for any terminal condition $\xi$ on a time interval of size $\delta$.\\ 
Taking $Y^0$  the unique fixed point of $\Phi$ in $\in\sp([T-\d, T])$, observe that $Y$ satisfies \eqref{eqsnell} on $[T-\d,T]$. In view of the link between the Snell envelope of processes and solutions of reflected BSDEs (see \cite{elkaroui1} for more details), there exist processes $(Z^0,K^0)\in \Hc^{2,d}\times
\Sc_{i}^{2}([T-\d, T])$ such that $K^0_{T-\d}=0$ and $(Y^0,Z^0,K^0)$ solves \eqref{Eq1} on
$[T-\d, T]$.\\
Applying the same reasoning on each time interval $[T-(i+1)\delta,T-i\delta]$ with a similar dynamics but terminal condition $Y^{i-1}_{T-i\delta}$ at time $T-i\delta$, we build recursively for $i=1$ to any $n$  a solution $(Y^i,Z^i,K^i)\in \sp\times\Hc^{2,d}\times\Sc_{i}^{2}$ of on each time interval $[T-(i+1)\delta,T-i\delta]$. Pasting properly these processes, we naturally derive a solution $(Y,Z,K)$ satisfying \eqref{Eq1} on the full time interval $[0,T]$.\\ 
As for the uniqueness of solution to \eqref{Eq1} on the full time interval $[0,T]$, observe that $Y$ is recursively uniquely defined on each time interval by the fixed point contraction, thanks to the Snell Envelope representation \eqref{eqsnell}. Hence, $Y$ and hereby $Z$ are uniquely defined on the full time interval $[0,T]$ while $K$ simply identifies to
$$
K_t=Y_0-Y_t-\int_{t}^{T}f(s,Y_{s},\E[Y_s])\,
ds+\int_{t}^{T}Z_{s}\,dB_{s}\;, 0\le t \le T\;. 
$$
Hence, \eqref{Eq1} admits a unique solution in $\sp\times\Hc^{2,d}\times\Sc_{i}^{2}$ on $[0,T]$.
\end{proof}

\begin{Remark}\lb{gagbpetit} We make the following observations on the sufficient condition \reff{gamma}. \\ 

$(a)$ Since $(\frac{p}{p-1})^p>1$, the inequality \eqref{gamma}  implies that $\ga+\gb<1$. The term $(\frac{p}{p-1})^p$ which inflates the Lipschitz constant $\ga$, due to the use of Doob's inequality, makes the condition \eqref{gamma} a bit heavy.  We will see in Theorem \ref{thmp1} (for the case $p=1$) that the solvability of the MF-BSDE \eqref{Eq1-p1} only requires  $\ga+\gb<1$, as Doob's inequality is not required for the proof.
\newline
$(b)$ Noting that \eqref{gamma}  also reads 
\begin{equation}\label{gamma2}(\frac{p}{p-1})^p\ga+\gb<\frac{1}{(\ga+\gb)^{p-1}},
\end{equation}
using the fact that  $\lim_{p\rightarrow \infty}(\frac{p}{p-1})^p=e$, if $\ga+\gb<1$ there exists $p$ for which \eqref{gamma} is satisfied, since, when $p\rightarrow \infty$,  the left-hand side converges to $e\ga+\gb$ while the 
the right-hand side diverges to $+\infty$. \newline
On the other hand, since $\lim_{p\rightarrow 1}(\frac{p}{p-1})^p=+\infty$, then \eqref{gamma} is satisfied only if $\ga$ very small which means that $h$ varies very slowly w.r.t the component $y$. \newline
 $(c)$ When the barrier $h$ does not depend on the mean-field term  $\E[Y]$ (i.e. $\gamma_2=0$), the fixed point argument through the Snell envelope may not be appropriate to obtain the solvability of the BSDE  unless $\gamma_1<\frac{p-1}{p}$. For example,  if $\frac{1}{2}<\gamma_1<1$, the BSDE with a standard driver $f$ i.e. independent of $\E[Y]$ and barrier $h(y)=\gamma_1y$ does not satisfy Theorem \ref{FP-Snell} for $p=2$. But if $\xi\ge 0$ and $f(t,\omega,y)\ge 0$, then any solution $Y$ of equation \eqref{eqsnell} is non-negative. Then the condition $Y_t\ge \gamma_1Y_t$ is equivalent to $Y_t\ge 0$, and by Corollary 3.7 in \cite{elkaroui1}, the corresponding BSDE has a unique solution.   \newline
 $(d)$ Whenever the solvency constraint only depends on $\E[Y_t]$, i.e. when $\ga=0$, observe that \reff{gamma} simply reduces to the condition $\gb<1$. This is quite natural as for example a linear constraint $Y_t \ge \gb \E[Y_t]$ would be automatically violated as soon as $\gb>1$.  \newline
 $(e)$In the particular case of linear constraint of the form $Y_t\ge \gamma_1 Y_t + h(\E[Y_t])$ with $\gamma_1\ge 0$, the condition \eqref{gamma} simply rewrites $\gamma_1+\gamma_2<1$ as the condition is simply equivalent to $Y_t\ge \frac{h(\E[Y_t])}{1-\gamma_1}$  
\end{Remark}

%%%%%%%%%%%%%%%%%%%%%%%%%%%%%%%%%%%%The case p=1%%%%%%%%%%%%%%%%%%%%%%%%%

\subsection{The case $p=1$} $ $   

In this case we establish that the map $\Phi$ has a unique fixed point on the complete metric space $\Dc$. 

Following the same line of proof as in Lemma \ref{well}, we can verify that $\Phi$ is a well-defined map from $\Dc$ to itself.

\begin{lemma}\label{well-p1} Let $f$, $h$ and $\xi$ satisfy Assumption (H1) for $p=1$. If $Y\in \Dc$ then $\Phi(Y)\in \Dc$.
\end{lemma}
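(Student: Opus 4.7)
The plan is to mirror the argument of Lemma \ref{well}, trading the $L^p$ control of running suprema for an $L^1$ bound at arbitrary stopping times, which is the topology governing $\Dc$. First I would linearize $f$ and $h$ along $Y$ exactly as in \eqref{linearize}, writing, for every $\nu\in\Tc_0$,
\begin{equation*}
L_\nu=\int_0^\nu\big(f(s,0,0)+a_f(s)Y_s+b_f(s)\E[Y_s]\big)ds+\big(h(0,0)+a_h(\nu)Y_\nu+b_h(\nu)\E[Y_s]_{s=\nu}\big)\ind_{\{\nu<T\}}+\xi\ind_{\{\nu=T\}},
\end{equation*}
with $|a_f|,|b_f|\le C_f$, $|a_h|\le\ga$ and $|b_h|\le\gb$ by Assumption (H1).

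The key observation is that $Y\in\Dc$ provides the bound $\E[|Y_\tau|]\le\|Y\|_1$ for every $\tau\in\Tc_0$, and in particular $\E[|Y_s|]\le\|Y\|_1$ for every deterministic $s\in[0,T]$. Combined with $f(\cdot,0,0)\in\Hc^{1,1}$ and $\xi\in L^1$, this yields the uniform estimate
\begin{equation*}
\sup_{\tau\in\Tc_0}\E[|L_\tau|]\le \E\!\left[\int_0^T|f(s,0,0)|ds\right]+2C_fT\|Y\|_1+|h(0,0)|+(\ga+\gb)\|Y\|_1+\E[|\xi|]=:\tilde C<\infty.
\end{equation*}
From \eqref{envsnell} and the optional sampling property of Snell envelopes, for any $\sigma\in\Tc_0$,
\begin{equation*}
\Phi(Y)_\sigma+\int_0^\sigma f(s,Y_s,\E[Y_s])\,ds=\esssup_{\tau\in\Tc_\sigma}\E[L_\tau\,|\,\Fc_\sigma],
\end{equation*}
so combining the triangle and Jensen inequalities with the Lipschitz estimate on $f$, I would obtain $\E[|\Phi(Y)_\sigma|]\le C(1+\|Y\|_1)+\tilde C$ uniformly in $\sigma\in\Tc_0$, whence $\|\Phi(Y)\|_1<\infty$.

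It remains to establish continuity of $\Phi(Y)$ on $[0,T]$. The pay-off process $L$ is continuous on $[0,T)$ since $Y$ is continuous and $t\mapsto\E[Y_t]$ is continuous (by path-continuity of $Y$ together with the $L^1$-bound above), and the condition $\xi\ge h(\xi,\E[\xi])$ in (H1)(iii) provides the non-negative jump $L_T-L_{T-}\ge 0$ at the horizon. Standard Snell envelope theory for such pay-offs (see e.g.\@ \cite{elkaroui1}) then yields continuity of $\Phi(Y)$. The main obstacle, compared with Lemma \ref{well}, is that Doob's $L^p$ maximal inequality is unavailable for $p=1$ to bound $\E[\sup_{t\le T}|\Phi(Y)_t|]$; the remedy is precisely the optional sampling bound at each individual stopping time $\sigma$, which is the natural control in the $\Dc$-topology.
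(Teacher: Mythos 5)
Your $\|\cdot\|_1$ estimate is correct and is exactly the adaptation the paper intends: the paper proves this lemma only by the remark that one follows the same line of proof as Lemma \ref{well}, and the one genuinely new point for $p=1$ --- that Doob's maximal inequality must be replaced by a bound on $\E[|L_\tau|]$ that is uniform over stopping times, exploiting that deterministic times and the evaluation time $\sigma$ are themselves in $\Tc_0$ so that $\E[|Y_s|]\le\|Y\|_1$ and $\E[|Y_\nu|]\le\|Y\|_1$ --- is the one you isolate. (To pass from $\sup_{\tau}\E[|L_\tau|]\le\tilde C$ to $\E\big[\esssup_{\tau\in\Tc_\sigma}\E[|L_\tau|\,|\,\Fc_\sigma]\big]\le\tilde C$ you should invoke the upward-directedness of the family $\{\E[|L_\tau|\,|\,\Fc_\sigma]\}_{\tau}$ together with monotone convergence, but that is routine.)

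The continuity discussion is where the argument is shaky, in two places. First, continuity of $t\mapsto\E[Y_t]$ does not follow from path-continuity of $Y$ plus $\sup_{\tau\in\Tc_0}\E[|Y_\tau|]<\infty$: that norm bound does not imply uniform integrability of $\{Y_t\}_{t\le T}$. For instance, a time-compressed copy of the martingale $\exp(W_s-s/2)$, glued to its a.s.\ limit $0$ at an interior time $t_0$, is continuous, adapted, has $\|Y\|_1\le 1$, and yet $\E[Y_t]$ drops from $1$ to $0$ at $t_0$; one must either read $\Dc$ as class (D) or supply an additional argument. Second, and more concretely wrong: the jump of $L$ at the horizon is $L_T-L_{T-}=\xi-h(Y_T,\E[Y_T])$, not $\xi-h(\xi,\E[\xi])$, because $Y$ is an arbitrary element of $\Dc$ and not the fixed point, so $Y_T\neq\xi$ in general and (H1)(iii) gives you nothing here; if $\xi<h(Y_T,\E[Y_T])$ with positive probability, the Snell envelope genuinely jumps down at $T$ and $\Phi(Y)$ fails to be continuous there. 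To be fair, both points are equally unaddressed in the paper's own treatment (the proof of Lemma \ref{well} establishes only the norm bound and is silent on continuity), so you have reproduced the paper's gap rather than introduced a new one --- but since $\Dc$ is by definition a space of continuous processes, these are precisely the steps that still require an argument.
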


In the next proposition we show that $\Phi$ admits a local fixed point. 

\begin{proposition} \label{pest1} Let $f$, $h$ and $\xi$ satisfy Assumption (H1) for $p=1$ and suppose that $$\ga+\gb<1\,.$$ Then there exists $\d>0$, depending only on $C_f, \ga$ and $\gb$  and a
process $Y$ which belongs to $\Dc([T-\d,T])$, such that for any $t\in  [T-\d,T]$,
$$
\Phi(Y)_t=Y_t.
$$
\end{proposition}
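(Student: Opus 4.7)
The plan is to apply the Banach fixed point theorem to $\Phi$ on the complete metric space $(\Dc([T-\d,T]),\|\cdot\|_1)$, mimicking Proposition~\ref{delta} but using stopping-time suprema of $L^1$ means instead of $\E[\sup|\cdot|^p]$. This change removes the need for Doob's inequality and explains why the sufficient condition \eqref{gamma} weakens here to $\ga+\gb<1$: the factor $(p/(p-1))^p$ that inflated $\ga$ in the $p>1$ case simply no longer appears. The first step is to check that $\Phi$ sends $\Dc([T-\d,T])$ into itself, which is a direct restriction of Lemma~\ref{well-p1} to this interval.

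For the contraction estimate, fix $Y,Y'\in\Dc([T-\d,T])$ and a stopping time $\nu\in\Tc_0$ with $T-\d\le\nu\le T$. Sub-additivity of the essential supremum, combined with the Lipschitz bounds on $f$ and $h$ from Assumption~(H1) and the trivial $|\E[Y_\tau]-\E[Y'_\tau]|\le\E[|Y_\tau-Y'_\tau|]$, yields
\begin{equation*}
|\Phi(Y)_\nu-\Phi(Y')_\nu|\le\esssup_{\tau\in\Tc_\nu}\E\left[\int_\nu^T 2C_f|Y_s-Y'_s|\,ds+\ga|Y_\tau-Y'_\tau|+\gb\E[|Y_\tau-Y'_\tau|]\,\Big|\,\Fc_\nu\right].
\end{equation*}
Taking the expectation and invoking the standard Snell-envelope identity
\begin{equation*}
\E\left[\esssup_{\tau\in\Tc_\nu}\E[X(\tau)\mid\Fc_\nu]\right]=\sup_{\tau\in\Tc_\nu}\E[X(\tau)],
\end{equation*}
valid for any integrable family indexed by stopping times and proved via the upward-directed lattice property of the conditional expectations together with monotone convergence, one then uses $\E[|Y_s-Y'_s|]\le\|Y-Y'\|_{\Dc([T-\d,T])}$ (valid for every deterministic $s$ and every stopping time in $[T-\d,T]$) to deduce
\begin{equation*}
\|\Phi(Y)-\Phi(Y')\|_{\Dc([T-\d,T])}\le(2C_f\d+\ga+\gb)\,\|Y-Y'\|_{\Dc([T-\d,T])}.
\end{equation*}

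Since $\ga+\gb<1$ by hypothesis, choosing $\d$ (depending only on $C_f$, $\ga$, $\gb$) small enough forces the contraction constant strictly below $1$, and Banach's fixed point theorem produces the required $Y\in\Dc([T-\d,T])$ with $\Phi(Y)=Y$. The main technical hurdle is the justification of the interchange of $\E$ and $\esssup$ at a stopping time, i.e.\ the Snell-envelope identity above; once this is in place, the remainder is a direct $L^1$ translation of the $L^p$ contraction step of Proposition~\ref{delta}, free of the $(p/(p-1))^p$ loss that forced the more restrictive condition \eqref{gamma}.
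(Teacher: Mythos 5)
Your proposal is correct and follows essentially the same route as the paper: a contraction estimate for $\Phi$ in the norm $\sup_{\tau}\E[|\cdot_\tau|]$ over $[T-\d,T]$, obtained by evaluating at an arbitrary stopping time, using the Lipschitz bounds on $f$ and $h$, passing $\E$ through the $\esssup$ via the optimal-stopping identity, and choosing $\d$ so that $2C_f\d+\ga+\gb<1$. The only (harmless) imprecision is writing $2C_f|Y_s-Y'_s|$ inside the conditional expectation where the pointwise bound is $C_f(|Y_s-Y'_s|+\E[|Y_s-Y'_s|])$; this washes out once the outer expectation is taken and does not affect the contraction constant.
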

\begin{proof} We are going to show  existence of such a $\d$ such that $\Phi$ is a contraction on
$(\Dc([T-\d,T]),\|\cdot\|_1)$. Indeed, let $Y$ and $Y'$ be two processes of $\Dc$ and let $\d$ be a positive constant to be determined later on. For any stopping time
$\theta$ such that $\P$-a.s. $T-\d\le \theta \le T$, we have
\begin{equation*}\begin{array} {lll}
|\Phi(Y)_{\theta}-\Phi(Y')_{\theta}|=|\underset{\tau\in\mathcal{T}_{\theta}}{\esssup\,}\E[\int_{\theta}^\tau
f(s,Y_s,
\E[Y_s])ds+h(Y_\tau,\E[Y_s]_{s=\tau})1_{\{\tau<T\}}+\xi1_{\{\tau=T\}}|\Fc_{\theta}]
 \\ \qquad\qquad\qquad\qquad -\,\,\underset{\tau\in\mathcal{T}_{\theta}}{\esssup\,}\E[\int_{\theta}^\tau f(s,Y'_s,
\E[Y'_s])ds+h(Y'_\tau,\E[Y'_s]_{s=\tau})1_{\{\tau<T\}}+\xi1_{\{\tau=T\}}|\Fc_{\theta}]|
\\   \le \underset{\tau\in\mathcal{T}_{\theta}}{\esssup\,}\E[\int_{\theta}^\tau |f(s,Y_s,
\E[Y_s])-f(s,Y'_s, \E[Y'_s])|ds+|h(Y_\tau,\E[Y_s]_{s=\tau})-h(Y'_\tau,\E[Y'_s]_{s=\tau})|
|\Fc_{\theta}].
 \end{array}
\end{equation*}
In view of the Lipschitz continuity of $f$ and $h$, we have
\begin{equation*}\begin{array} {lll}
\E[|\Phi(Y)_{\theta}-\Phi(Y')_{\theta}|]\\
\quad \le \sup_{\tau\in\mathcal{T}_{\theta}}\E[\int_{{\theta}}^\tau |f(s,Y_s,
\E[Y_s])-f(s,Y'_s, \E[Y'_s])|ds +|h(Y_\tau,\E[Y_s]_{s=\tau})-h(Y'_\tau,\E[Y'_s]_{s=\tau})|]\\
\quad \le \sup_{\tau\in\mathcal{T}_{\theta}}\E[\int_{T-\d}^T|f(s,Y_s,
\E[Y_s])-f(s,Y'_s, \E[Y'_s])|ds +|h(Y_\tau,\E[Y_s]_{s=\tau})-h(Y'_\tau,\E[Y'_s]_{s=\tau})|]
\\ 
\quad =(2\d C_f+\ga+\gb) \underset{T-\d\le \tau\le T}{\sup}\E[|Y_\t-Y'_\t|].
 \end{array}
\end{equation*}
Now, since $\theta$ is an arbitrary stopping time in $[T-\d,T]$, it holds that 
\begin{equation*}
\sup_{T-\d\le \tau \le T}\E[|\Phi(Y)_\t-\Phi(Y')_\t|]\le (2\d C_f +\ga+\gb)\sup_{T-\d\le \tau\le T}\E[|Y_\t-Y'_\t|].
 \end{equation*}
Now, it is immediate to see that, whenever $\ga+\gb<1$, there exists $\d>0$ only depending on $C_f$, $\ga$ and $\gb$ such that $2\d C_f +\ga+\gb<1$. This yields that the map
$\Phi$ is a contraction on the complete metric space $(\Dc[T-\d,T],\|\cdot\|_1)$. Hence, $\Phi$ has a unique fixed point, i.e., there exists $Y\in \Dc[T-\d,T]$, such that
for any $t\in [T-\d,T]$, $\Phi(Y)_t=Y_t$. 
\end{proof}

In the next proposition we show that the fixed point $Y$ of $\Phi$ on $(\Dc([T-\d,T]),\|\cdot\|_1)$  yields the existence of a unique local solution of \eqref{Eq1} in $(\Dc([T-\delta,T])$.

\begin{proposition} \lb{pest2}Let $f$, $h$ and $\xi$ satisfy Assumption (H1) for $p=1$ and suppose that 
\begin{equation}\label{gamma-p1}
\ga+\gb<1.
\end{equation} 
Then, there exist $\d>0$ only depending on $C_f, \ga$ and $\gb$, and a triplet of processes $(Y,Z,K)$ such that
\begin{equation}\label{Eq2-p1}
\left\{
    \begin{array}{lll}
      Y\in \Dc([T-\delta,T]), \,\,Z\in \underset{q\in (0,1)}{\cup}\Mc^{q}([T-\delta,T]),\\  K \mbox{ continuous nondecreasing such that}\, K_{T-\d}=0,\,\, \E[K_T]<\infty, \,\,\\
       Y_{t}=\xi+\int_{t}^{T}f(s,Y_{s},\E[Y_s])\,
ds+K_{T}-K_{t}-\int_{t}^{T}Z_{s}\,dB_{s},\quad T-\d\le t\le T, \\
      Y_{t}\geq h(Y_t,\E[Y_t]),\,\,\,  \forall t \in [T-\d,T] \,\,\mbox{ and }\,\,\int_{T-\d}^T (Y_t-h(Y_t,\E[Y_t]))dK_t = 0.
    \end{array}
  \right.
\end{equation}
Moreover, if $(\bar Y,\bar Z,\bar K)$ is another triple which satisfies \eqref{Eq2-p1},
then for $t\in [T-\d,T]$, $Y_t=\bar Y_t$, $K_t=\bar K_t$ and
$Z_t1_{\{T-\d\le t\le T\}}=\bar Z_t 1_{\{T-\d\le t\le T\}}$, $dt\otimes d\P$-a.e.
\end{proposition}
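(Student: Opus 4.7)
The strategy is to reduce the problem, via the fixed point $Y \in \Dc([T-\d,T])$ provided by Proposition \ref{pest1}, to a \emph{standard} reflected BSDE with $L^1$-data on $[T-\d,T]$, and then invoke the theory of Snell envelopes and the Doob--Meyer decomposition in the $L^1$ framework. The point is that once the mean-field terms $\E[Y_s]$ are frozen to their (deterministic) values, the equation becomes a classical RBSDE whose driver and obstacle are no longer coupled to the law of the solution.

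\textbf{Step 1 (Data reduction).} Set
$$
\tilde f_s := f(s,Y_s,\E[Y_s]), \qquad \tilde h_t := h(Y_t,\E[Y_t]), \qquad T-\d \le s,t \le T.
$$
Since $Y\in\Dc([T-\d,T])$ satisfies $\sup_{T-\d\le s\le T}\E[|Y_s|]\le \|Y\|_1<\infty$, the Lipschitz property of $f$ in Assumption (H1), together with $\E[\int_0^T |f(s,0,0)|ds]<\infty$, yield $\E[\int_{T-\d}^T |\tilde f_s|ds]<\infty$; similarly, the Lipschitz property of $h$ yields $\tilde h\in\Dc([T-\d,T])$. The fixed point identity $\Phi(Y)=Y$ then reads
$$
Y_t = \esssup_{\tau\in\Tc_t}\E\Bigl[\int_t^\tau \tilde f_s\,ds + \tilde h_\tau \1_{\{\tau<T\}} + \xi \1_{\{\tau=T\}}\Bigm|\Fc_t\Bigr], \qquad t\in[T-\d,T].
$$

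\textbf{Step 2 (Snell envelope, Doob--Meyer and martingale representation).} Introduce $U_t := Y_t + \int_{T-\d}^t \tilde f_s\,ds$, which equals the Snell envelope on $[T-\d,T]$ of the continuous process
$$
L_t := \int_{T-\d}^t \tilde f_s\,ds + \tilde h_t \1_{\{t<T\}} + \xi \1_{\{t=T\}}.
$$
From the integrability just established, $L$ is of class (D), hence so is $U$. By the Doob--Meyer decomposition, $U_t = M_t - K_t$ with $M$ a uniformly integrable c\`adl\`ag martingale and $K$ a predictable nondecreasing process with $K_{T-\d}=0$ and $\E[K_T]<\infty$; continuity of $L$ (which follows from continuity of $Y$ and of $\tilde h$) forces continuity of both $U$ and $K$. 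Brownian martingale representation in the $L^1$-setting (in the spirit of Briand--Delyon--Hu--Pardoux--Stoica and its extension to reflected BSDEs) provides $Z\in \cup_{q\in(0,1)}\Mc^q([T-\d,T])$ with $M_t = M_{T-\d} + \int_{T-\d}^t Z_s\,dB_s$. Rewriting this decomposition at a generic $t\in[T-\d,T]$ and using $Y_T=\xi$ produces exactly the RBSDE dynamics in \eqref{Eq2-p1}. The Skorokhod condition $\int_{T-\d}^T(Y_t-\tilde h_t)dK_t=0$ is the standard minimality characterisation of the Snell envelope: $K$ is flat on the set $\{Y_t>\tilde h_t\}$.

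\textbf{Step 3 (Uniqueness).} If $(\bar Y,\bar Z,\bar K)$ satisfies \eqref{Eq2-p1}, then a classical argument combining It\^o's formula and the Skorokhod condition yields that $\bar Y$ coincides with the Snell envelope associated with the frozen data $\bar f_s:=f(s,\bar Y_s,\E[\bar Y_s])$ and $\bar h_t:=h(\bar Y_t,\E[\bar Y_t])$; equivalently $\bar Y = \Phi(\bar Y)$ on $[T-\d,T]$. Uniqueness of the fixed point in Proposition \ref{pest1} therefore gives $\bar Y = Y$ on $[T-\d,T]$. Subtracting the dynamics of $Y$ and $\bar Y$ yields $\int_{T-\d}^t(\bar Z_s-Z_s)dB_s = K_t - \bar K_t$, in which the left-hand side is a local martingale and the right-hand side a continuous finite-variation process; both sides vanish, giving $\bar Z = Z$ $dt\otimes d\P$-a.e.\ and $\bar K = K$.

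The main technical obstacle is Step 2: one must justify the Doob--Meyer decomposition and Brownian representation under merely $L^1$ integrability. This requires verifying the class (D) property of $L$ (which relies crucially on $Y\in\Dc$ rather than just being adapted) and appealing to the fact that, in the $L^1$ framework, the martingale part belongs to $\Mc^q$ only for $q<1$, which is precisely the regularity imposed in Definition \ref{rbsde}(2).
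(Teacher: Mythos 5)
Your proposal is correct and follows essentially the same route as the paper: freeze the mean-field terms using the fixed point from Proposition \ref{pest1}, identify $Y_t+\int_{T-\delta}^t f(s,Y_s,\E[Y_s])\,ds$ as the Snell envelope of the frozen obstacle, apply the Doob--Meyer decomposition and martingale representation to produce $(Z,K)$ with $Z\in\cup_{q<1}\Mc^q$, obtain the Skorokhod condition from optimal stopping theory, and derive uniqueness from the uniqueness of the fixed point of $\Phi$ on $[T-\delta,T]$. The only cosmetic difference is that the paper exhibits the optimal stopping time $D_t=\inf\{s\ge t: Y_s=h(Y_s,\E[Y_s])\}\wedge T$ explicitly to justify the flatness of $K$, whereas you invoke the same fact abstractly.
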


\begin{proof}  Let $\d$ be the positive constant and $Y$ the process defined in Proposition \ref{pest1} above. Then, for all $t\in [T-\d,T]$,
\begin{equation}\lb{expyenvsnell}
Y_t=\underset{\tau\in\mathcal{T}_t}{\esssup\,}\E[\int_t^\tau
f(s,Y_s,
\E[Y_s])ds+h(Y_\tau,\E[Y_s]_{s=\tau})1_{\{\tau<T\}}+\xi1_{\{\tau=T\}}|F_t].\end{equation}
Therefore, $(Y_t+\int_{T-\d}^tf(s,Y_s,\E[Y_s])ds)_{t\in [T-\d,T]}$
belongs to $\Dc$  since $Y\in\Dc$  and it is the Snell envelope of
the process $(\int_{T-\d}^tf(s,Y_s,
\E[Y_s])ds+h(Y_t,\E[Y_t])1_{\{t<T\}}+\xi1_{\{t=T\}})_{t\in
[T-\d,T]}$. Then, by applying the Doob-Meyer decomposition
(\cite{dm}, pp. 211), there exists a continuous martingale $(M_t)_{t\in
[T-\d,T]}$ and a non-decreasing process $(K_t)_{\in [T-\d,T]}$
$(K_{T-\d}=0)$ such that
$$\forall \,\, t\in [T-\d,T],\quad  Y_t+\int_{T-\d}^tf(s,Y_s,\E[Y_s])ds=M_t-K_t.$$
Moreover, $\E[K_T]+\E[|M_T|]<\infty$. Next, for $t\le T-\d$, we set $M_t:=Y_t=\E[M_{T-\d}|\Fc_t]$ and $K_t=0$. Then, by the Martingale Representation Theorem
there exists a $\Pc$-measurable process $(Z_t)_{t\in  [T-\d,T]}$ valued in $\R^d$, such that for any $t\in  [T-\d,T]$,
$$
M_t=M_{T-\d}+\int_{T-\d}^t Z_sdB_s.
$$
Thus, for any $t\in [T-\d,T]$,
$$
Y_{t}=\xi+\int_{t}^{T}f(s,Y_{s},\E[Y_s])\,
ds+K_{T}-K_{t}-\int_{t}^{T}Z_{s}\,dB_{s}\,\, \mbox{ and }\,\,Y_t\ge
h(Y_t,\E[Y_t]).$$ On the other hand,  since  $\E[|M_T|]<\infty$, we observe that $\E[\sup_{t\in  [T-\d,T]}|M_t|^q]<\infty$, for
any $q\in (0,1)$ (see
\cite{briandetal2}, pp.125). Therefore, the
Burkholder-Davis-Gundy inequality (\cite{ry}, pp. 161) implies that $Z\in \Mc^{q}([T-\d,T])$ for any $q\in (0,1)$ and we deduce that
$Z\in \cup_{q\in (0,1)}\Mc^{q}([T-\d,T])$. Finally, for
$t\in [T-\d,T]$, the random time defined by
$$
D_t:=\inf\{s\in  [T-\d,T], Y_s=h(Y_s,\E[Y_s])\}\wedge T,
$$
is a stopping time which realizes the essential supremum in
\eqref{expyenvsnell}, i.e., it is optimal. Thus, by the optimal
stopping properties we have that
$\int_t^{D_t}(Y_s-h(Y_s,\E[Y_s])dK_s=0$ for any $t\in [T-\d,T]$.
Consequently we have $\int_{T-\d}^T(Y_s-h(Y_s,\E[Y_s])dK_s=0$ (see\cite{elkaroui1}, pp. 717). \ms

We now prove uniqueness. Let $(\bar Y, \bar Z,\bar
K)$ be another triple that satisfies \eqref{Eq2-p1}. Therefore, $\bar
Y$ satisfies, for all $t\in [T-\d,T]$,
\begin{equation}\lb{expyenvsnellbar}
\bar Y_t=\underset{\tau\in\mathcal{T}_t}{\esssup\,}\E[\int_t^\tau
f(s,\bar Y_s, \E[\bar Y_s])ds+h(\bar Y_\tau,\E[\bar
Y_s]_{s=\tau})1_{\{\tau<T\}}+\xi1_{\{\tau=T\}}|F_t].\end{equation}
As $\bar Y$ belongs $\Dc([T-\d,T])$, then it is the fixed point of the mapping
$\Phi$ on $[T-\d,T]$, thus $\bar Y_t=Y_t$ for any $t\in [T-\d,T]$.
Now, the equations satisfied by $Y$ and $\bar Y$ imply that for any
$t\in [T-\d,T]$ $K_t=\bar K_t$ and $Z_t1_{\{T-\d\le t\le T\}}=\bar
Z_t 1_{\{T-\d\le t\le T\}}$, as claimed.
\end{proof}

Arguing now as  in Theorem \ref{FP-Snell}, we are able to paste solutions on small intervals and derive a global solution on any time interval $[0,T]$, which is the main result of this
section, together with Remark \ref{marginal} below:

\begin{Theorem} \label{thmp1}Let $f$, $h$ and $\xi$ satisfy
Assumption (H1) for $p=1$ and suppose that $\ga+\gb<1$.\\ Then, there is a unique
triple of processes $(Y,Z,K)$ such that
\begin{equation}
 \label{Eq3-p1}\left\{
    \begin{array}{l}
      Y\in \Dc,\,\,  Z\in \underset{q\in (0,1)}{\cup}\Mc^{q}, \,\, K\in \Sc^1_i,  \\
       Y_{t}=\xi+\int_{t}^{T}f(s,Y_{s},\E[Y_s])\,
ds+K_{T}-K_{t}-\int_{t}^{T}Z_{s}\,dB_{s},\quad \forall t\in [0,T], \\
      Y_{t}\geq h(Y_t,\E[Y_t]),\,\,\,  \forall t \in [0,T] \,\,\mbox{ and }\,\, \int_{0}^T (Y_t-h(Y_t,\E[Y_t]))dK_t = 0.
    \end{array}
  \right.
\end{equation}
\end{Theorem}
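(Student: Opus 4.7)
The plan is to mimic the pasting argument carried out in Theorem \ref{FP-Snell}, but using Proposition \ref{pest2} as the local existence block. The decisive feature we will exploit is that the length $\delta$ produced in Proposition \ref{pest2} depends only on the structural constants $C_f,\ga,\gb$ (through the contraction condition $2\delta C_f+\ga+\gb<1$), and not on the specific terminal datum. Hence the very same $\delta$ can be used on every subinterval of a partition of $[0,T]$.

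First I would fix such a $\delta>0$ and set $n:=\lceil T/\delta\rceil$, producing adjacent intervals $I_i:=[(T-(i+1)\delta)\vee 0,\;T-i\delta]$ for $i=0,\dots,n-1$. I then build recursively, for $i=0,\dots,n-1$, a triple $(Y^i,Z^i,K^i)$ solving \eqref{Eq2-p1} on $I_i$ with terminal value $\xi^i$, where $\xi^0:=\xi$ and $\xi^i:=Y^{i-1}_{T-i\delta}$ for $i\ge 1$. To apply Proposition \ref{pest2} at step $i$, I must check that $\xi^i$ fulfils Assumption (H1)(iii) restricted to the new terminal time: measurability with respect to $\Fc_{T-i\delta}$ is immediate; integrability $\E[|\xi^i|]<\infty$ follows from $Y^{i-1}\in\Dc(I_{i-1})$, and the obstacle inequality $\xi^i\ge h(\xi^i,\E[\xi^i])$ is precisely the pointwise barrier condition satisfied by $Y^{i-1}$ at its left-endpoint. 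Note that the local uniqueness part of Proposition \ref{pest2} guarantees that the blocks are intrinsically unambiguous.

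Next I would paste the pieces together by setting $Y_t:=Y^i_t$, $Z_t:=Z^i_t$ on $I_i$, and defining $K$ inductively as the continuous non-decreasing process with $K_0=0$ whose increments on $I_i$ (travelled backwards) coincide with those of $K^i$; by construction $K$ is continuous at every gluing time $T-i\delta$ because $Y$ itself is continuous there, so the defining equation in \eqref{Eq3-p1} holds across endpoints. The global integrability $Y\in\Dc$, $Z\in\cup_{q\in(0,1)}\Mc^q$ and $K\in\Sc^1_i$ then follow by summing the $n$ local bounds, while the Skorokhod condition $\int_0^T(Y_t-h(Y_t,\E[Y_t]))\,dK_t=0$ is just the sum of the corresponding local identities.

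Uniqueness is handled top-down: if $(\bar Y,\bar Z,\bar K)$ is any other triple satisfying \eqref{Eq3-p1}, then its restriction to $I_0=[T-\delta,T]$ solves \eqref{Eq2-p1} with terminal condition $\xi$, so by the local uniqueness of Proposition \ref{pest2} it agrees there with $(Y,Z,K)$; in particular $\bar Y_{T-\delta}=Y_{T-\delta}$, which lets us repeat the argument on $I_1$, and so on down to $[0,\delta\wedge T]$. The only point that requires a little care—and is the one I expect to be mildly technical—is the continuity of the pasted $K$ at the gluing times: this is where we use both the continuity of $Y$ and the fact that each $K^i$ starts at zero on its own interval, so no jump is introduced. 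Everything else is bookkeeping.
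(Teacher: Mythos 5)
Your proposal is correct and follows essentially the same route as the paper, which simply invokes the pasting argument of Theorem \ref{FP-Snell} with Proposition \ref{pest2} as the local building block: the key point in both is that $\delta$ depends only on $C_f,\ga,\gb$ and not on the terminal datum, so the construction iterates, and your verification that $\xi^i=Y^{i-1}_{T-i\delta}$ inherits (H1)(iii) is exactly the detail the paper leaves implicit.
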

\begin{Remark} Recall that the process $Y$ which satisfies \eqref{Eq3-p1} also admits the representation \eqref{MF-Snell}.

\end{Remark}

%%%%%%%%%%%%%%%%%%%%%%%%%%%%%%%%Extension to general MF-coupling%%%%%%%%%%%%%%%%%%%%%%%%%%%%%

\begin{remark}\label{marginal} Theorems \ref{FP-Snell} and \ref{thmp1} are still valid if we replace the mean-field coupling $\E[Y_t]$ with the  marginal law  $\P_{Y_t}$ of $Y$ at time $t$ in both the driver  $f$ and the obstacle $h$, i.e. consider a MF-BSDE driven by $f(Y_t,\P_{Y_t})$ reflected on $h(Y_t,\P_{Y_t})$ and with terminal condition $\xi$, provided that $f$ and $h$ are Lipschitz continuous w.r.t. the Wasserstein distance i.e.  Assumptions (i) (b) and (ii)  are replaced with the following. For any $y_1,y_2\in\R$ and $\mu,\nu\in \mathcal{P}_p(\R^d)$,
$$
\begin{array}{lll}|f(t,y_1,\mu)-f(t, y_2,\nu)|\le C_f(|y_1-y_2|+d_p(\mu,\nu)),\\ \\
|h(y_1,\mu)-h(y_2,\nu)|\le \gamma_1|x-y|+\gamma_2d_p(\mu,\nu),
 \end{array}
 $$
where $d_p(\cdot,\cdot)$ is the $p$-Wasserstein distance between two probability measures on the subset $\mathcal{P}_p(\R^d)$ of probability measures with finite $p$-th moment, formulated in terms of a coupling between two random variables $X$ and $Y$ defined on the same probability space:
\begin{equation*}\label{d-coupling}
d_p(\mu,\nu):=\inf\left\{\left(\E\left[|X-Y|^p\right]\right)^{1/p},\,\,\text{law}(X)=\mu,\,\text{law}(Y)=\nu \right\}.
\end{equation*}
In particular, we have, for any $0\le s\le u\le t\le T$,
\begin{equation}\label{coupling-ineq}
\sup_{u\in[s,t]}d_p\left(\P_{Y_u},\P_{Y^{\prime}_u}\right)\le \sup_{u\in[s,t]}(\E[|Y_u-Y^{\prime}_u|^p])^{1/p}
\end{equation}
from which we derive the following useful inequality
\begin{equation}\label{0-coupling-ineq}
\sup_{u\in[s,t]}d_p(\P_{Y_u},\delta_0)\le \sup_{u\in[s,t]}(\E[|Y_u|^p])^{1/p}. 
\end{equation}
Moreover, the linearization \eqref{linearize} of the mappings $f$ and $h$ used in the proof above is still valid. Indeed, for $s\le T$, we have
$$
\begin{array}{lll}
f(s,Y_s,\P_{Y_s})=f(s,0,\delta_0)+a_f(s)Y_s+b_f(s)d_p(\P_{Y_s},\delta_0)\\ h(Y_s,\P_{Y_s})=h(0,\delta_0)+a_h(s)Y_s+b_h(s)d_p(\P_{Y_s},\delta_0),
\end{array}
$$
where, in view of the Lipschitz continuity of $f$ and $h$ w.r.t. $(y,\mu)$, the coefficients 
$b_f(\cdot)$ and $b_h(\cdot)$ become, for any $s\le T$, 
$$
b_f(s):=\frac{f(s,0,\P_{Y_s})-f(s,0,\delta_0)}{d_p(\P_{Y_s},\delta_0)}\ind_{\{d_p(\P_{Y_s},\delta_0)\neq 0\}},\quad b_h(s):=\frac{h(0,\P_{Y_s})-h(0,\delta_0)}{d_p(\P_{Y_s},\delta_0)}\ind_{\{d_p(\P_{Y_s},\delta_0)\neq 0\}}.
$$
and satisfy together with $a_f(\cdot)$ and $a_h(\cdot)$
$$
\max(|a_f(.)|,|b_f(.)|)\le C_f,\,\,\, |a_h(.)|\le \gamma_1,\,\,\, |b_h(.)|\le \gamma_2.
$$
Thanks to this linearization and the inequality \eqref{0-coupling-ineq}, Propositions \ref{well} and \ref{well-p1} extend to the general case under consideration. Moreover, the inequality \eqref{coupling-ineq} makes the above fixed point argument used in Theorems \ref{FP-Snell} and \ref{thmp1} extend to this general case as well. 

\end{remark}

%%%%%%%%%%%%%%Penalization scheme%%%%%%%%%%%%%%%%%%%%%%%%%%%%

\section{Convergence of the penalization scheme}

We now present an alternative approach for the derivation of a unique solution  to \eqref{Eq1}, using a penalization type constructive argumentation, which may also reinterpret as a recursive scheme  reflecting at step $n$ the solution to the BSDE $Y^n$ on the obstacle generated by the solution $Y^{n-1}$. The monotonic convergence of the penalized BSDEs naturally relies on a comparison argumentation, which requires as for non-reflected MF BSDE \cite{blp09} additional monotone properties on the driver and the constraint, see Assumption (H2) right below. Under such assumption, we verify that the Skorohod type condition indeed characterizes the minimality property of the solution to the constrained BSDE. The result is besides derived in Theorem \ref{Theor} under an additional monotony property of the driver, which is in fact not necessary as explained in Corrolary \ref{resulsansmonotf}. For the sake of simplicity, we choose to restrict to the case $p=2$.

The following assumptions on the mappings $f$, $h$ and terminal value $\xi$ will be in force throughout this Section. 

\ms
\underline{\bf Assumption (H2)}\: $f$, $h$ and $\xi$ satisfy
\ms
\begin{itemize}
\item[(a)] Assumption (H1) for $p=2$.
\item[(b)] For any fixed $y$ (resp. $y'$), the mapping $y'\in \R\mapsto
h(y,y')$ (resp.
 $y\in \R\mapsto h(y,y')$ is non-decreasing.

\item[(c)] the mapping $y'\in \R \mapsto f(t,y,y')$ is non-decreasing, for
$t,y$ fixed.
\end{itemize}

\ms
\begin{Remark}
\noindent (i) The monotonicity conditions imposed on $f$ and $h$ in (H2)(b,c) are needed to be able to apply the comparison theorem for solutions of mean-field BSDEs (cf. \cite{blp09}).

\medskip

\noindent (ii) Since $h$ is Lipschitz, by (H2)-b),  linearizing $h$, we obtain,  for all $y$ and
$y'$,
\begin{equation}\label{ineqh}
(h(y,y'))^+\le h(0,0)^++\ga y^++\gb(y')^+.\end{equation}

\end{Remark}
Let us now introduce the following penalization scheme of the
equation \eqref{Eq1}. For $n\ge 0$, let $(Y^n,Z^n)$ be the pair of
processes defined as follows: $Y^0=\underbar Y$
 where $(\underbar Y,\underbar Z)$ is the solution of the following BSDE of mean field type (which exists according to Theorem 3.1. in \cite{blp09}):
  \begin{equation}
 \label{Eq2}\left\{
    \begin{array}{ll}
      \underbar Y\in \Sc^{2}_c,\quad \underbar Z\in \Hc^{2,d} ;\\
       \underbar Y_{t}=\xi+\int_{t}^{T}f(s,\underbar Y_{s},\E[\underbar Y_s])
ds-\int_{t}^{T}\underbar Z_{s}\,dB_{s}, \quad t\le T.
    \end{array}
  \right.
\end{equation}
Next, for $n\ge 1$, we define $(Y^n,Z^n)$ as the solution of the
following standard BSDE:
\begin{equation}
\label{Eq3}\left\{
    \begin{array}{lll}
      Y^n\in \Sc^{2}_c, Z^n\in \Hc^{2,d} ;\\
       \displaystyle Y^n_{t}=\xi+\int_{t}^{T}f(s,Y^n_{s},\E[Y^{n-1}_s])\,
ds+K^n_T-K^n_t-\int_{t}^{T}Z^n_{s}\,dB_{s},\,\,t\le T,
  \end{array}
  \right.
\end{equation}
where, for $n\ge 1$,  $K^n$ denotes the  process
$$ \begin{array}{c}
K^n_t:=n\int_0^t(Y^n_s-h(Y_s^{n-1},\E[Y_s^{n-1}]))^-ds,\quad t\le T.\end{array}
$$
\begin{Theorem} \label{Theor} Suppose that $(\ga,\gb)$ satisfies \eqref{gamma} with $p=2$ and  that Assumption (H2) is fulfilled. Suppose also that $y\in \R \mapsto f(t,y,y')$ is non-decreasing, for any
$t,y'$ fixed.\\  
Then, the sequence $(Y^n,Z^n,K^n)_{n\ge 0}$ converges in $\Sc^{2}_c\times \Hc^{2,d} \times \Sc^{2}$ to the unique solution $(
Y,Z,K)$ of \eqref{Eq1} i.e.
\begin{equation}
 \label{Eq1pen}\left\{
    \begin{array}{ll}
     Y\in \Sc^{2}_c, Z\in \Hc^{2,d} \mbox{ and } K\in \Sc_{ci}^{2};\\
       \displaystyle Y_{t}=\xi+\int_{t}^{T}f(s,Y_{s},\E[Y_s])\,
ds+K_{T}-K_{t}-\int_{t}^{T}Z_{s}\,dB_{s},\,\,t\leq T; \\
      Y_{t}\geq h(Y_t,\E[Y_t]),\,  \forall t \geq T \mbox{ and } \int_0^T (Y_t-h(Y_t,\E[Y_t]))dK_t = 0.
    \end{array}
  \right.
\end{equation}
Besides, the Skorohod condition ensures the minimality property of the solution. 
\end{Theorem}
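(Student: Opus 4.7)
My plan is to adapt the classical penalization argument of El Karoui et al.~\cite{elkaroui1} to the mean-field setting, leveraging the comparison theorem for mean-field BSDEs from \cite{blp09}, whose hypotheses are precisely the monotonicity conditions of (H2) together with the extra monotonicity of $y\mapsto f(t,y,y')$. The proof unfolds in four natural steps: monotonicity of the penalized sequence, an upper bound by the RBSDE solution, strong convergence in the appropriate spaces, and finally the Skorohod condition and minimality.

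\medskip
I first prove by induction that $Y^n\le Y^{n+1}$ for every $n$. The base case follows from standard BSDE comparison, since $Y^0$ and $Y^1$ share the same mean-field coefficient $\E[Y^0_s]$ in the driver while $Y^1$ carries an additional non-negative penalty drift. For the inductive step, I rewrite $Y^n$ and $Y^{n+1}$ as solutions of standard BSDEs with drivers
\begin{equation*}
g^n(s,y) := f(s,y,\E[Y^{n-1}_s]) + n(y - h(Y^{n-1}_s,\E[Y^{n-1}_s]))^-,
\end{equation*}
\begin{equation*}
g^{n+1}(s,y) := f(s,y,\E[Y^n_s]) + (n+1)(y - h(Y^n_s,\E[Y^n_s]))^-,
\end{equation*}
and combine the inductive hypothesis $Y^{n-1}\le Y^n$ with (H2)(b)--(c) and the anti-monotonicity of $x\mapsto x^-$ to obtain $g^n\le g^{n+1}$, whence $Y^n\le Y^{n+1}$. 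A similar induction, based on mean-field comparison, shows that $Y^n\le Y$ for all $n$, where $(Y,Z,K)$ is the unique solution of \eqref{Eq1} granted by Theorem \ref{FP-Snell}: the key observation is that $Y\ge h(Y,\E[Y])$ combined with (H2)(b) implies $(Y_s - h(Y^{n-1}_s,\E[Y^{n-1}_s]))^- = 0$ as soon as $Y^{n-1}\le Y$, so $Y$ itself solves a BSDE whose driver dominates $g^n$ pointwise (by (H2)(c)) plus the non-negative drift $dK$.

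\medskip
From the envelope $Y^0\le Y^n\le Y$ together with It\^o's formula applied to $|Y^n|^2$ and the Burkholder--Davis--Gundy inequality, I derive the uniform bound
\begin{equation*}
\sup_{n\ge 0}\Bigl(\|Y^n\|_{\Sc^2}^2 + \|Z^n\|_{\Hc^{2,d}}^2 + \E[(K^n_T)^2]\Bigr) < +\infty,
\end{equation*}
and a standard energy argument produces the decay
\begin{equation*}
\E\!\left[\int_0^T ((Y^n_s - h(Y^{n-1}_s,\E[Y^{n-1}_s]))^-)^2 ds\right] \le C/n.
\end{equation*}
Passing to the limit, combined with the strong $L^2$ convergence of $Y^{n-1}$ to $\bar Y:=\lim_n Y^n$, yields the reflection inequality $\bar Y\ge h(\bar Y,\E[\bar Y])$. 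Strong convergence $Y^n\to\bar Y$ in $\Sc^2_c$ then follows from pointwise monotone convergence, the dominating bound $Y^n\le Y\in\Sc^2_c$, and a combination of Dini's theorem applied $\omega$-wise (once the continuity of $\bar Y$ is read off from the limit BSDE) and dominated convergence; strong convergence of $Z^n$ in $\Hc^{2,d}$ and of $K^n$ in $\Sc^2$ is then obtained from the usual Cauchy-sequence argument applied to $Y^n-Y^m$ via It\^o's formula.

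\medskip
Finally, passing termwise to the limit in the BSDE satisfied by $(Y^n,Z^n,K^n)$ shows that $(\bar Y,\bar Z,\bar K)$ obeys the dynamics of \eqref{Eq1pen}. For the Skorohod condition I exploit the elementary identity
\begin{equation*}
\int_0^T (Y^n_s - h(Y^{n-1}_s,\E[Y^{n-1}_s])) dK^n_s = -n\int_0^T ((Y^n_s - h(Y^{n-1}_s,\E[Y^{n-1}_s]))^-)^2 ds \le 0,
\end{equation*}
and pass to the limit to conclude $\int_0^T (\bar Y_s - h(\bar Y_s,\E[\bar Y_s])) d\bar K_s \le 0$; the reverse inequality is immediate from $\bar Y\ge h(\bar Y,\E[\bar Y])$ and $\bar K$ non-decreasing, so equality holds. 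Uniqueness from Theorem \ref{FP-Snell} identifies $(\bar Y,\bar Z,\bar K)$ with $(Y,Z,K)$, and minimality is automatic: any solution $(Y',Z',K')$ of \eqref{Eq1pen} satisfying $Y'\ge h(Y',\E[Y'])$ dominates every $Y^n$ by a comparison argument identical to the one used for $Y^n\le Y$, hence dominates $\bar Y=Y$ in the limit. The main technical hurdle will be the passage to the limit in the Stieltjes integral of the displayed identity, which couples a strongly-converging integrand with a measure-valued sequence; I will handle it via integration by parts together with the strong $\Sc^2$ convergence of $K^n$ established in the previous step.
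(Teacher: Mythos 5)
Your overall strategy (monotone penalization, comparison via \cite{blp09}, identification of the limit with the solution of Theorem \ref{FP-Snell}) is the same as the paper's, and Steps 1, 2 and the final minimality argument are essentially correct. However, there is a genuine gap at the central technical point: the continuity of the limit $\bar Y=\lim_n Y^n$, equivalently the uniform-in-time convergence of $(Y^n)_n$. You propose to obtain continuity of $\bar Y$ ``read off from the limit BSDE'' and then apply Dini's theorem, but this is circular: an increasing limit of continuous processes is in general only rcll (this is exactly what Peng's monotone limit theorem delivers), and you cannot pass to the limit in the penalized BSDE to produce a \emph{continuous} increasing process $\bar K$ without first knowing that the convergence of $Y^n$ is uniform --- which is precisely what Dini's theorem is supposed to give you \emph{after} continuity of the limit is known. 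In the classical setting of \cite{elkaroui1} continuity of the limit is inherited from the continuity of a \emph{given} obstacle; here the obstacle $h(Y^{n-1},\E[Y^{n-1}])$ is generated by the scheme itself, so continuity of $\bar Y$ is genuinely at stake and cannot be asserted for free.

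The paper closes this gap in its Step 3 by an argument you do not reproduce and which is where the hypothesis \eqref{gamma} actually enters the penalization proof (note that in your write-up \eqref{gamma} is never used except implicitly through Theorem \ref{FP-Snell}, which is a warning sign). Namely: one first writes $Y^n$ as a Snell envelope, uses the monotone convergence of Snell envelopes to get the representation \eqref{snellenvelopey} for $\bar Y$, and then invokes the characterization of the (necessarily negative) jumps of a Snell envelope, $-\Delta \bar Y_t \le \ga(-\Delta \bar Y_t)+\gb(-\Delta \E[\bar Y_t])$. Taking expectations shows that a jump of $t\mapsto\E[\bar Y_t]$ would contradict $\ga+\gb<1$ (a consequence of \eqref{gamma} with $p=2$), so $\E[\bar Y_\cdot]$ is continuous; a second application of the jump characterization then rules out jumps of $\bar Y$ itself using $\ga<1$. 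Only after this can Dini's theorem be applied and the Doob--Meyer/martingale-representation identification of $(\bar Z,\bar K)$ be carried out. Your treatment of the Skorohod condition via the identity $\int_0^T(Y^n_s-h(Y^{n-1}_s,\E[Y^{n-1}_s]))\,dK^n_s=-n\int_0^T((Y^n_s-h(Y^{n-1}_s,\E[Y^{n-1}_s]))^-)^2ds\le 0$ is a sound alternative to the paper's appeal to optimal stopping theory, but it too presupposes the strong convergence of $(Y^n,K^n)$ that hinges on the missing continuity step.
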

\begin{Remark} The additional assumption on the monotone property of $f(t,.,y')$ for any $(t,y')$ is in fact not necessary for such result, as detailed in Corrolary \ref{resulsansmonotf} below
\end{Remark}

\begin{proof}: Observe first, that, since the solution $( \underbar Y, \underbar Z)$ of \eqref{Eq2} exists, then, by
induction, it follows that, for any $n\ge 1$, there is a unique solution $(Y^n,Z^n)$ of \eqref{Eq3}. We proceed to the proof of the convergence in the following five steps. We first verify that the sequence $(Y^n)_n$ is monotone and bounded, leading to a limit $\hat Y$ which is proved to satisfy the constraint in the second step. Next, we derive the continuity of $\hat Y$ and finally conclude on the dynamics of the limit of $(Y^n,Z^n,K^n)$ as well as on the minimality property.

\medskip

\underline{Step 1}. We have, for any $n\ge 0$, $Y^n\le Y^{n+1}\le
Y$. \ms

\nd We will show by induction that, for any $n\ge 0$, $Y^0\le
Y^1\le Y^n\le Y^{n+1}\le Y$. To begin with, recall that
$(Y,Z,K)$ satisfies
\begin{equation}
 \label{Eq1comp}\left\{
    \begin{array}{ll}
      \displaystyle Y_{t}=\xi+\int_{t}^{T}f(s,Y_{s},\E[Y_s])\,
ds+K_{T}-K_{t}-\int_{t}^{T}Z_{s}\,dB_{s},\,\,t\leq T; \\
      Y_{t}\geq h(Y_t,\E[Y_t]),\,  \forall t \geq T \mbox{ and } \int_0^T (Y_t-h(Y_t,\E[Y_t]))dK_t = 0.
    \end{array}
  \right.
\end{equation}
Now, consider the following standard reflected BSDE satisfied
by $(\bar Y, \bar Z ,\bar K)$:
\begin{equation}
 \label{Eq1compint}\left\{
    \begin{array}{ll}
      \displaystyle \bar Y_{t}=\xi+\int_{t}^{T}f(s,\bar Y_{s},\E[Y_s])\,
ds+\bar K_{T}-\bar K_{t}-\int_{t}^{T}\bar Z_{s}\,dB_{s},\,\,t\leq T; \\
      \bar Y_{t}\geq h(Y_t,\E[Y_t]),\,  \forall t \geq T \mbox{ and } \int_0^T (\bar Y_t-h(Y_t,\E[Y_t]))dK_t = 0.
    \end{array}
  \right.
\end{equation}
Since the solution of \eqref{Eq1compint} is unique, then  we obviously
have $\bar Y=Y$.

The fact that $Y^0\leq Y^1$ follows from the standard comparison theorem for solutions of BSDEs (see
e.g. \cite{elkaroui2}) since the penalization term is non negative. Next,
if for some $n\ge 1$ we have $Y^{n-1}\leq Y^n$, then using once more
the comparison theorem for standard BSDEs we get that $Y^{n}\leq Y^{n+1}$
since the mappings $y'\mapsto f(y,y')$, $y\mapsto h(y,y')$ and
$y'\mapsto h(y,y')$ are non-decreasing. Thus, for any $n\ge 0$, we have $Y^n\leq Y^{n+1}$.

Since the mapping $y'\mapsto f(y,y')$ is non-decreasing, by the comparison theorem for solutions of mean-field BSDEs in \cite{blp09} (see Theorem \ref{compappen} in Appendix), we have also $Y^0=\underbar Y\le Y=\bar Y$.

The mappings $y'\mapsto f(y,y')$, $y\mapsto h(y,y')$ and
$y'\mapsto h(y,y')$ being nondecreasing, we have
$$
\{f(s,y,\E[Y^{0}_s])+n(y-h(Y_s^{0},\E[Y_s^{0}])^-\}_{|y=\bar Y_s}=f(s,y,\E[Y^0_s])_{|y=\bar Y_s}\le
f(s,y,\E[Y_s])_{|y=\bar Y_s}.
$$
In view of the  comparison theorem for solutions of standard BSDEs we obtain that
$Y^1\le \bar Y$. But, $\bar Y$ is nothing else but $Y$. Therefore, $Y^1\le
Y$. Next, if for some $n\ge 1$, we have $Y^n\le \bar Y$, we use the
same argument to show that $Y^{n+1}\le \bar Y=Y$. Therefore, for any
$n\ge 0$, $Y^n\le Y$. \ms

\nd Summing up,  for any $n\ge 0$, $\,Y^n\le Y^{n+1}\le Y$. \bs

 \und{Step 2}. The limit $\hy$ satisfies the solvency constraint. \\
 Set, for $t\in [0,T]$, 
$$
\hat Y_t:=\lim_{n\rightarrow\infty}Y^n_t.
$$
Then the process $\hy$ is rcll and satisfies
\begin{equation}\lb{haty2}
\E[\sup_{t\le T}|\hy_t|^2]<\infty\quad  \mbox{ and }\quad \forall t\le
T,\quad \hy_t\ge h(\hy_t,E[\hy_t]).
\end{equation}
Indeed, the existence of the process $\hy$ is due to the fact that the
sequence $(Y^n)_{n\ge 0}$ is non decreasing. Moreover, since $Y^0\le \hy \le Y$, the square
integrability of $\hy$ is due to the square-integrability of $Y^0$ and $Y$. 

Next, for $n\ge 1$, let us set $\bar Y^n=Y^n-Y^0$. Therefore, $\bar
Y^n$ satisfies, for all $t\leq T$,
$$
\begin{array}{ll}
\bar Y^n_t=\int_{t}^{T}\{f(s,Y^n_{s},\E[Y^{n-1}_s])-f(s,\underbar
Y_{s},\E[\underbar Y_s])\}ds  \\ \qquad\qquad\quad 
\qquad \qquad \qquad +n\int_t^T(Y^n_s-h(Y_s^{n-1},\E[Y_s^{n-1}]))^-ds-\int_{t}^{T}(Z^n_{s}-\underbar
Z_{s})dB_{s}. \end{array}
$$
Since $f$ is nondecreasing w.r.t $y$ and $y'$, and we have $Y^n\ge \underbar Y$, then for any $s\le T$, $f(s,Y^n_{s},\E[Y^{n-1}_s])-f(s,\underbar Y_{s},\E[\underbar Y_s])\ge 0$.
This in turn  implies that for any $n\ge 0$, $\bar Y^n$ is a continuous
supermartingale which converges increasingly to $\hy -Y^0$,
therefore $\hy-Y^0$ is rcll (\cite{dm}, pp. 86). Consequently, $\hy$
is rcll, since $Y^0$ is continuous. Next, by \eqref{Eq3}, we have
$$
\E[\int_t^T(Y^n_s-h(Y_s^{n-1},\E[Y_s^{n-1}]))^-ds]=
\frac{1}{n}\E[Y^n_{0}-\xi-\int_{t}^{T}f(s,Y^n_{s},\E[Y^{n-1}_s])
ds].
$$
By Fatou's Lemma, we have
$$\E[\int_t^T(\hy _s-h(\hy_s,\E[\hy_s]))^-ds]\leq
\liminf_{n\rightarrow
\infty}\frac{1}{n}\E[Y^n_{0}-\xi-\int_{t}^{T}f(s,Y^n_{s},\E[Y^{n-1}_s])
ds]=0
$$
since $Y^0\le Y^n\le Y$ and the processes $Y^0, Y$ belong to $\mathcal{
S}^2_c$. As $\hy$ and $h(\hy,\E[\hy])$ are rcll, then for any $t<T$,
$\hy_t\geq h(\hy_t,\E[\hy_t])$. Finally, the inequality holds also on
$T$ since $\hat Y_T=\xi$ and $\xi\ge h(\xi,\E[\xi])$ by (H1)-iii).
\bigskip

\und{Step 3}. $\hy$ is continuous. 

\nd First note that by the uniform square integrability of $\hy$,
the process $(h(\hy_t,\E[\hy_t])_{t\le T}$ is uniformly square integrable since $h$ is Lipschitz
w.r.t $(y,y')$. On the other hand the process $Y^n$ has the
following representation as a Snell Envelope of processes: for all $t\in [0,T]$,
\begin{equation*}\lb{snellenvelopeyn}
Y^n_{t}=\underset{\tau \ge t}{\esssup}\E[\int_t^{\tau}f(s,Y^{n}_s,\E[Y^{n-1}_s])ds+Y^n_\tau\wedge
h(Y^{n-1}_\tau,\E[Y^{n-1}_t]_{|t=\tau})1_{\{\tau<T\}}+\xi
1_{\{\tau=T\}}|\Fc_{t}],
\end{equation*}
since
$$\begin{array}{c}
Y^n\geq Y^n\wedge h(Y^{n-1},\E[Y^{n-1}]) \mbox{ and }\int_0^T(Y^n_s-
Y_s^n\wedge h(Y_s^{n-1},\E[Y_s^{n-1}]))dK^n_s=0.\end{array}
$$ One can see the paper by El-Karoui et al. \cite{elkaroui1} for more details.
Now, if
for any $n\ge 0$, $U^n$ and $U$ are rcll and $U^n\nearrow U$ then
$\mbox{SN}(U^n)\nearrow \mbox{ SN}(U)$ (see e.g. \cite{djehmdpop} for more
details). It follows that, for any $t\le T$,
\begin{equation}\lb{snellenvelopey}
\hat Y_{t}=\underset{\tau \ge t}{\esssup}\E[\int_t^{\tau}f(s,\hat
Y_s,\E[\hat Y_s])ds+ h(\hat Y_\tau,\E[\hat
Y_t]_{|t=\tau})1_{\{\tau<T\}}+\xi 1_{\{\tau=T\}}|\Fc_{t}]
\end{equation}
since $Y^n\nearrow \hat Y$, $f$ and $h$ are increasing in their
arguments and finally $\hat Y\ge h(\hat Y,E[\hat Y])$. \ms

\nd Next, for $t\le T$, let us set  $\Phi(t):=\E[\hat Y_t]$ and 
show that $\Phi$ is continuous. 

\nd Indeed, first note that since the process $(\hat Y_t+\int_0^{t}f(s,\hat
Y_s,\E[\hat Y_s])ds)_{t\leq T}$ is a supermartingale, the
possible jumps of the process $\hat Y$ are only negative. Suppose
now that there exists $t$ such that $\Delta \Phi(t)<0$. Then  $\P\{\omega, \Delta \hat Y_t(\omega)<0\}>0$ and $\Delta
\Phi(t)=\E[\Delta\hat  Y_t]$. But \eqref{snellenvelopey} and the
characterization of the jumps of the Snell envelope (see e.g.
\cite{hmd2}) imply that $$-\Delta \hat Y_t=\lim_{s\nearrow t}(h(\hat
Y_s,\E[\hat Y_s])-h(\hat Y_t,\E[\hat Y_t]))\le \gamma_1(-\Delta \hat
Y_t)+\gamma_2(-\Delta \Phi(t)).$$ Take now the expectation in both
hand-sides to obtain that
$$-\Delta \Phi(t)\le
\gamma_1(-\Delta \Phi(t))+\gamma_2(-\Delta \Phi(t)).$$ But, this
contradicts the inequality $\gamma_1+\gamma_2<1$ (see \eqref{gamma} for $p=2$ and Remark \ref{gagbpetit}). 
Therefore, $\Phi$ is
continuous.

We now show that $\hat Y$ is continuous. Suppose there exists a
stopping time $\sigma$ such that $\Delta \hat Y_\sigma<0$. Then the
characterization of jumps of a Snell envelope of processes imply
that
$$
-\Delta \hat Y_\sigma\leq h(\hat Y_{\sigma-},\E[\hat
Y_t]_{t=\sigma})- h(\hat Y_\sigma,\E[\hat Y_t]_{t=\sigma})\leq
\gamma_1 (-\Delta \hat Y_\sigma)$$ since $\Phi(t)=\E[\hat Y_t]$ is
continuous. The last inequality is absurd since $\gamma_1<1$. Thus, for any stopping 
time $\sigma$, $\Delta \hat Y_\sigma=0$ i.e. $\hat Y$ is continuous.
\bigskip

\und{Step 4}. The sequence $(Y^n,Z^n,K^n)_{n\ge 0}$ converges to
$(\hy,\hz,\hk)$ solution of the reflected BSDE \eqref{Eq1pen}.

First note that since $\hy$ is continuous then, by Dini's theorem,
the convergence of $(Y^n)_{n\ge 0}$ to $\hy$ is uniform on $[0,T]$, i.e.,
$$
\P\mbox{-a.s.}, \quad \lim_{n\rightarrow \infty}\sup_{t\leq
T}|Y^n_t-\hy_t|=0.
$$
Next, the characterization \eqref{snellenvelopey} implies the
existence of a martingale $\hat M$ and a continuous increasing
process $\hat K$ ($\hat K_0=0$) such that
 $$
\hat Y_{t}+\int_0^{t}f(s,\hat Y_s,\E[\hat Y_s])ds=\hat M_t-\hat K_t,\quad 
t\leq T.$$ 
Moreover, in view of \eqref{haty2}, we have
$$
\E[(\hat K_T)^2]<\infty \quad \mbox{ and then }\quad \E[\sup_{t\leq T}|\hat M_t|^2]<\infty.
$$
The rest of the proof is classical i.e the martingale representation
provides the process $\hat Z$ and $\hat K$ satisfies the Skorohod
condition, i.e., $(\hat Y,\hat Z,\hat K)$ is a solution of the mean-field reflected BSDE \eqref{Eq1} (see \cite{elkaroui1} for
more details).
\bigskip

\und{Step 5}. The minimality property\\ 
 Lastly, let us show that this solution is minimal. Let $(Y',Z',K')$ be another solution of \eqref{Eq1} without imposing the Skorohod type condition. Then, by comparison of solutions
of mean-field BSDEs (see Theorem \ref{compappen} below) we obtain $Y^0\le Y'$. Next, assume that for some $n\ge 0$, we have $Y^n\le Y'$. Once more by comparison we obtain $Y^{n+1}\le Y'$ since 
\begin{align}
f(t,y,&\E[Y^n_t])+{(n+1)(y-h(Y^n_t, \E[Y^n_t]))^- }_{|_{y=Y'_t}}\nn\\&
=f(t, Y'_t, \E[Y^n_t])\le f(t, Y'_t,\E[Y'_t])\le f(t, Y'_t,\E[Y'_t])+dK'_t. \nn
\end{align}
Thus, for any $n\ge 0$, $Y^n\le Y'$ which implies, in taking the limit, that $Y\le Y'$ and then $Y$ is the minimal solution. 
\end{proof}
\begin{corollary}\lb{resulsansmonotf}Assume that $(\ga,\gb)$ satisfies \eqref{gamma} and $f$, $h$ and $\xi$ satisfy Assumption (H2).\\% with the exception that $f$ does not satisfy the monotonicity condition (d) w.r.t. $y$. 
Then, the sequence $(Y^n,Z^n,K^n)_{n\ge 0}$ converges in
 $\Sc^{2}_c\times \Hc^{2,d}\times \Sc_{c}^{2}$ to the unique
solution of the mean-field RBSDE \eqref{Eq1}  and the Skorohod condition induces the minimality of the solution.
\end{corollary}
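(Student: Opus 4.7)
The plan is to reduce the statement to the setting of Theorem \ref{Theor} through an exponential change of unknown which restores the missing monotonicity of the driver in its $y$-argument. Specifically, setting $\lambda := C_f$ and, for each $n\ge 0$, introducing
\[
\tilde Y^n_t := e^{-\lambda t} Y^n_t,\qquad \tilde Z^n_t := e^{-\lambda t}Z^n_t,\qquad \tilde K^n_t := \int_0^t e^{-\lambda s}\,dK^n_s,
\]
a direct application of It\^o's formula shows that $(\tilde Y^n, \tilde Z^n, \tilde K^n)$ solves a penalized mean-field BSDE with the same penalization intensity $n$, terminal condition $\tilde\xi := e^{-\lambda T}\xi$, time-dependent driver
\[
\tilde f(t,\tilde y,\tilde y') := \lambda\tilde y + e^{-\lambda t}\, f\bigl(t,e^{\lambda t}\tilde y, e^{\lambda t}\tilde y'\bigr),
\]
and time-dependent obstacle $\tilde h(t,\tilde y,\tilde y') := e^{-\lambda t} h(e^{\lambda t}\tilde y, e^{\lambda t}\tilde y')$. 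The penalization structure is preserved since
\[
e^{-\lambda t}\, n\,(Y^n_t - h(Y^{n-1}_t,\E[Y^{n-1}_t]))^- \;=\; n\,(\tilde Y^n_t - \tilde h(t,\tilde Y^{n-1}_t,\E[\tilde Y^{n-1}_t]))^-.
\]

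The key gain from this transformation is that, owing to the Lipschitz property of $f$ in $y$ with constant $C_f$ and the choice $\lambda = C_f$, the new driver $\tilde f$ is non-decreasing in its $\tilde y$-argument: for $\tilde y_1 \ge \tilde y_2$,
\[
\tilde f(t,\tilde y_1,\tilde y') - \tilde f(t,\tilde y_2,\tilde y') \;\ge\; \lambda(\tilde y_1-\tilde y_2) - C_f(\tilde y_1-\tilde y_2) \;=\; 0.
\]
Monotonicity of $\tilde f$ in $\tilde y'$ and of $\tilde h$ in both $(\tilde y,\tilde y')$ are inherited from their counterparts for $(f,h)$, while the Lipschitz constants of $\tilde h$ in $(\tilde y,\tilde y')$ remain exactly $(\gamma_1,\gamma_2)$ because the $e^{\pm\lambda t}$ factors cancel; in particular, the smallness condition \eqref{gamma} is preserved. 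The Lipschitz constant of $\tilde f$ is still finite (bounded by $2C_f$), and $\tilde\xi\ge \tilde h(T,\tilde\xi,\E[\tilde\xi])$ follows directly from $\xi\ge h(\xi,\E[\xi])$. Hence the transformed triple $(\tilde f,\tilde h,\tilde\xi)$ satisfies every hypothesis of Theorem \ref{Theor}, now including the additional monotonicity of the driver in its $y$-argument.

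Applying Theorem \ref{Theor} to the transformed problem therefore yields convergence of $(\tilde Y^n,\tilde Z^n, \tilde K^n)$ in $\Sc^2_c\times\Hc^{2,d}\times \Sc^2_c$ to the unique minimal solution of the transformed mean-field reflected BSDE. Undoing the transformation via $(Y_t,Z_t,dK_t) := (e^{\lambda t}\tilde Y_t, e^{\lambda t}\tilde Z_t, e^{\lambda t}d\tilde K_t)$ transfers the convergence back to the original scales---these norms being equivalent on $[0,T]$ since $e^{\pm\lambda t}$ is bounded---and produces a limit solving \eqref{Eq1}, which must coincide with the unique solution from Theorem \ref{FP-Snell}. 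The minimality property transfers as well, since the bijection $Y\leftrightarrow\tilde Y$ preserves pointwise ordering of solutions and the Skorohod condition. The only technical point that needs attention is the applicability of the arguments in Theorem \ref{Theor} to the now time-dependent coefficients $(\tilde f,\tilde h)$, but this is merely notational: the comparison theorems, the Snell envelope representation and its monotone convergence, and the jump analysis used to prove continuity in Section 4 rely solely on pointwise-in-$t$ Lipschitz and monotonicity conditions, all of which remain valid for the transformed data.
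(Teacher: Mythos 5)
Your proposal is correct and follows essentially the same route as the paper: the paper's proof of this corollary is precisely an exponential change of unknown $\unb Y_t=e^{\theta t}Y_t$ with $\theta$ chosen so that the shifted driver $e^{\theta t}f(t,e^{-\theta t}y,e^{-\theta t}y')-\theta y$ becomes non-decreasing in $y$ (your choice $\lambda=C_f$ is exactly that), after which Theorem \ref{Theor} applies to the transformed data and the convergence is transferred back. Your version is, if anything, slightly more careful on two points the paper glosses over --- the correct definition $\tilde K_t=\int_0^te^{-\lambda s}\,dK_s$ rather than $e^{\theta t}K_t$, and the explicit remark that the (now time-dependent) coefficients still satisfy the hypotheses needed in the proof of Theorem \ref{Theor}.
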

\begin{proof} Let $\theta$ be a real constant and  $(Y,Z,K)$ be a solution of \eqref{Eq1}. For $t\le T$, set
$$
\unb Y_t=e^{\theta t}Y_t,\quad  \unb Z_t=e^{\theta t}Z_t \quad \text{ and }\quad \unb K_t=e^{\theta t}K_t.
$$
Then, $(\unb Y,\unb Z,\unb K)$ is  a solution of the following mean-field RBSDE:
\begin{equation}
 \label{Eq1equiv}\left\{
    \begin{array}{ll}
      \unb Y\in \Sc^{2}_c,\quad \unb Z\in \Hc^{2,d}, \quad \unb K\in \Sc_{i}^{2};\\
       \unb  Y_{t}=e^{\theta T}\xi+\int_{t}^{T}\unb F(s,\unb Y_{s},\E[\unb Y_s])\,
ds+\unb K_{T}-\unb K_{t}-\int_{t}^{T}\unb Z_{s}\,dB_{s},\quad  t\geq T; \\
      \unb Y_{t}\geq e^{\theta t}h(e^{-\theta t}\unb Y_t,e^{-\theta t}\E[\unb Y_t]),\,  \forall t \leq T, \\   \int_0^T (\unb Y_t-e^{\theta t}h(e^{-\theta t}\unb Y_t,e^{-\theta t}\E[\unb Y_t]))d\unb K_t = 0,
    \end{array}
  \right.
\end{equation}
where
$$
\unb F:(t,y,y')\mapsto e^{\theta t}f(t,e^{-\theta t}y,e^{-\theta t}y') -\theta y.
$$
But, by choosing $\theta$ appropriately we make that the function $\unb F$ satisfy all the assumptions (H2). Therefore, by
Theorem \ref{Theor}, its penalization scheme $(\unb Y^n,\unb
Z^n,\unb K^n)$, $n\ge 0$, defined similarly as in \eqref{Eq3}, 
converges in $\Sc^{2}_c\times \Hc^{2,d}\times \Sc_{c}^{2}$ to $(\unb Y,\unb Z,\unb K)$ the unique solution of \eqref{Eq1equiv}. But, by uniqueness, for any $t\le T$,
$$
\unb Y^n_t=e^{\theta t}Y^n_t,\quad  \unb Z^n_t=e^{\theta t}Z^n_t
\quad \text{ and }\quad d\unb K^n_t=e^{\theta t}dK^n_t.
$$
 Therefore $(Y^n,Z^n,K^n)_{n\ge 0}$ converges in
 $\Sc^{2}_c\times \Hc^{2,d}\times \Sc_{c}^{2}$ to $(Y,Z,K)$, the
 unique solution of \eqref{Eq1}. 
\end{proof}
\begin{remark}The constraint in (H2)-b) related to the monotonicity of $y\mapsto h(y,y')$ can also be relaxed substantially. Indeed, for $\kappa>0$, let us set
$$
\Psi_\kappa: (y,y')\mapsto \frac{1}{1+\kappa \ga}h(y,y')+\frac{\kappa \ga}{1+\kappa \ga}y.
$$
Observe, that, for any $\kappa>1$, the mapping $y\mapsto \Psi_\kappa (y,y')$ is non decreasing w.r.t $y$ even when $y\mapsto h(y,y')$  does not enjoy any specific property of monotonicity. On the other hand, the condition $y\ge h(y,y')$ is equivalent to $y\ge \Psi_\kappa (y,y')$, and the corresponding Skorohod conditions also coincide. Therefore as soon as the Lipschitz constants of $\Psi_\kappa$ verify the condition \eqref{gammatr}, i.e.,
\begin{equation}\label{gammatr}
(\frac{\ga+\kappa\ga}{1+\kappa\ga}+\frac{\gb}{1+\kappa\ga})^{\frac{p-1}{p}} \{(\frac{p}{p-1})^p\frac{\ga+\kappa\ga}{1+\kappa\ga}+\frac{\gb}{1+\kappa\ga}\}^{\frac{1}{p}}<1,
\end{equation}the penalization scheme of the MFBSDE associated with coefficients $(f,\xi,\Psi_\kappa)$ converges to the solution of the MFBSDE of interest associated with $(f,\xi,h)$.
\end{remark}
\section{A more general case: $f$ further depends on $z$} \label{general} 

\noindent The last Section of the paper is dedicated to the more involved framework, where the driver $f$ may also depend on the $Z$ process and, once again, in this Section we restrict to the case where $p=2$, for the sake of simplicity.

\noindent The following assumptions on the mappings $f$, $h$ and terminal value $\xi$ will be assumed hereafter.

\noindent \underline{\bf Assumption (H3)}. The coefficients $f, h$ and $\xi$ satisfy the following set of assumptions. 

\begin{itemize}

\item[(i)] $f$ is a mapping from $[0,T]\times \Omega\times \R^{2+d}$ into $\R$ such that
\begin{itemize}
\item[(a)] the process $(f(t,0,0,0))_{t\le T}$ is $\Pc$-measurable and belongs to $\mathcal{H}^{2,1}$;

\item[(b)] $f$ is Lipschitz w.r.t. $(y,z,y')$ uniformly in $(t,\omega)$, i.e., there exists a positive constant $C_f$ such that  $\P$-$\as$ for all $t\in [0,T]$,
$$
|f(t,y_1,z_1,y'_1)-f(t, y_2,z_2,y'_2)|\le C_f(|y_1-y_2|+|z_1-z_2|+|y'_1-y'_2|).
$$
for any $y_1,y'_1,y_2,y'_2\in\R$ and $z_1,z_2$ in $\R^d$.
\item[(c)] the mapping $y'\in \R \mapsto f(t,y,z,y')$ is non-decreasing, for
$t,y,z$ fixed.
\item[(d)] The domination condition: There exists a measurable function $\Phi(t,\omega,y,y')$ from $[0,T]\times \Omega \times \R^{2}$ into $\R^+$ such that: i) $\Phi$ is
Lipschitz in $(y,y')$ uniformly w.r.t $(t,\omega)$ ; ii) the process $(\Phi(t,\omega,0,0))_{t\le T}$ belongs to ${\mathcal H}^{2,1}$ ; 
iii) $\P
$-a.s. for any $t,y,y',z$ we have
$$
f(t,\omega,y,z,y')\le \Phi(t,\omega,y,y').
$$
\end{itemize}
\item[(ii)]  
\begin{itemize}
\item[(a)] $h$ is a mapping from $\R^{2}$ into $\R$ which is Lipschitz w.r.t. $(y,y')$, i.e., there exist two positive constants $\gamma_1$ and $\gamma_2$ such that for any $x,y,x'$ and $y'$
 $$
 |h(x,x')-h(y,y')|\le \gamma_1|x-y|+\gamma_2|x'-y'|,
 $$
 where $\gamma_1$ and $\gamma_2$ are two positive constants.
 \item[(b)] For any fixed $y$ (resp. $y'$), the mapping $y'\in \R\mapsto
h(y,y')$ (resp.
 $y\in \R\mapsto h(y,y')$ is non-decreasing.
\end{itemize}
\item[(iii)]  $\xi$ is an $\Fc_T$-measurable, $\R$-valued r.v., $\E[\xi^2]<\infty$ and satisfies
  $\xi\ge h(\xi,\E[\xi])$.
\end{itemize}
We have the following result.
\begin{Theorem} \label{Theor2} Suppose that $(\ga,\gb)$ satisfies \eqref{gamma} with $p=2$ and  that Assumption (H3) is fulfilled. Then there exists a triplet of processes
$(Y,Z,K)$ which is the minimal solution of the following mean-field reflected BSDE:
\begin{equation}
 \label{Eqz}\left\{
    \begin{array}{ll}
Y\in \Sc^{2}_c, Z\in \Hc^{2,d} \mbox{ and } K\in \Sc_{ci}^{2};\\
      Y_{t}=\xi+\int_{t}^{T}f(s,Y_{s},Z_s,\E[Y_s])\,
ds+ K_{T}- K_{t}-\int_{t}^{T}Z_{s}\,dB_{s},\,\,t\leq T; \\
      Y_{t}\geq h( Y_t,\E[ Y_t]),\,  \forall t \geq T \,\, \mbox{ and } \,\, \int_0^T ( Y_t-h( Y_t,\E[ Y_t]))d K_t = 0.
    \end{array}
  \right.
\end{equation}
\end{Theorem}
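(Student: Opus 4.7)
The strategy mirrors the penalization approach of Section 4, adapted to the $z$-dependence of the driver, and uses the domination condition (H3)(i)(d) to obtain the a priori upper bound required to pass to the limit. Let $(\underbar Y,\underbar Z)\in\Sc^2_c\times\Hc^{2,d}$ be the unique solution of the (non-reflected) mean-field BSDE with driver $f$ and terminal $\xi$, whose existence follows from \cite{blp09}. Set $Y^0:=\underbar Y$ and define recursively, for $n\ge 1$, $(Y^n,Z^n)\in\Sc^2_c\times\Hc^{2,d}$ as the unique solution of the standard BSDE
\[
Y^n_t=\xi+\int_t^T f(s,Y^n_s,Z^n_s,\E[Y^{n-1}_s])\,ds+K^n_T-K^n_t-\int_t^T Z^n_s\,dB_s, \qquad t\le T,
\]
with $K^n_t:=n\int_0^t (Y^n_s-h(Y^{n-1}_s,\E[Y^{n-1}_s]))^-\,ds$. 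At each iteration, the driver is Lipschitz in $(y,z)$ and the mean-field argument $\E[Y^{n-1}]$ is frozen, so well-posedness is standard.

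\textbf{Monotone convergence.} As in the proof of Theorem \ref{Theor}, the sequence $(Y^n)_n$ is shown to be non-decreasing by combining the non-negativity of the penalization with the comparison theorem for standard BSDEs, using additionally the monotonicity of $f$ in $y'$ (H3)(i)(c) and of $h$ in both arguments (H3)(ii)(b) at the induction step. To obtain a uniform upper bound, apply Theorem \ref{Theor} (through the exponential change of variable of Corollary \ref{resulsansmonotf} if needed) to a monotonized majorant of $\Phi$ --- for instance $\tilde\Phi(t,y,y'):=|\Phi(t,0,0)|+C_\Phi(|y|+|y'|)$, which fits the framework of Section 4 and still dominates $f$. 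This furnishes a solution $(\bar Y,\bar Z,\bar K)\in\Sc^2_c\times\Hc^{2,d}\times\Sc^2_{ci}$ of the mean-field reflected BSDE driven by $\tilde\Phi$. A second induction yields $Y^n\le\bar Y$: at points where $Y^n$ touches $\bar Y$, the inequality $f\le\tilde\Phi$ controls the drifts, while the penalization term of $Y^n$ is dominated by the Skorohod increment $d\bar K$, thanks to $Y^{n-1}\le\bar Y$ and the monotonicity of $h$. Consequently the pointwise limit $\hat Y:=\lim_n Y^n$ satisfies $\underbar Y\le\hat Y\le\bar Y$, in particular $\E[\sup_{t\le T}|\hat Y_t|^2]<\infty$, and is rcll as the increasing limit of the continuous supermartingales $(Y^n-Y^0)_n$. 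The constraint $\hat Y_t\ge h(\hat Y_t,\E[\hat Y_t])$ follows from Fatou applied to $\E[\int_0^T(Y^n_s-h(Y^{n-1}_s,\E[Y^{n-1}_s]))^-\,ds]$, which is of order $1/n$.

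\textbf{Identification and minimality.} Writing each $Y^n$ as a Snell envelope of processes and passing monotonically to the limit yields the representation
\[
\hat Y_t=\underset{\tau\in\Tc_t}{\esssup}\,\E\Bigl[\int_t^\tau f(s,\hat Y_s,\hat Z_s,\E[\hat Y_s])\,ds+h(\hat Y_\tau,\E[\hat Y_s]_{s=\tau})\mathbf{1}_{\{\tau<T\}}+\xi\mathbf{1}_{\{\tau=T\}}\Big|\Fc_t\Bigr],
\]
from which the continuity of $\hat Y$ is deduced by the jump analysis of Snell envelopes exactly as in Step~3 of the proof of Theorem \ref{Theor}, using $\gamma_1+\gamma_2<1$ (a consequence of \eqref{gamma}). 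The Doob-Meyer decomposition of the continuous supermartingale $\hat Y+\int_0^\cdot f(s,\hat Y_s,\hat Z_s,\E[\hat Y_s])\,ds$ combined with the martingale representation theorem produces $(\hat Z,\hat K)\in\Hc^{2,d}\times\Sc^2_{ci}$ such that $(\hat Y,\hat Z,\hat K)$ solves \eqref{Eqz}; Skorohod flatness follows from the optimal stopping characterization. Minimality is then obtained as in Step~5 of Theorem \ref{Theor}: given any other solution $(Y',Z',K')$ of \eqref{Eqz}, a recursive comparison combining the monotonicity of $f$ in $y'$ and of $h$ with comparison for standard BSDEs gives $Y^n\le Y'$ for every $n$, hence $\hat Y\le Y'$.

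\textbf{Main obstacle.} The principal difficulty, compared with Section 4, lies in the construction and control of the upper bound $\bar Y$ in the presence of the $z$-variable; the domination condition (H3)(i)(d) is tailored precisely to circumvent this, since $\Phi$ (or its monotonized majorant $\tilde\Phi$) no longer depends on $z$, so Section 4 can be applied to it. A secondary, milder difficulty lies in passing to the limit in the BSDE dynamics despite the $Z^n$-dependence of the driver; here the monotone stability result \`a la Peng, or the Snell-envelope/Doob-Meyer argument already employed in the proof of Theorem \ref{Theor}, suffices to recover $\hat Z$ and $\hat K$.
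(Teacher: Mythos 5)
Your overall architecture (penalization with frozen mean-field term, a dominating solution built from $\Phi$, monotone limit, Snell-envelope identification, minimality by recursive comparison) is exactly the paper's, but the step where you construct the upper bound $\bar Y$ would fail as written. You propose to apply Theorem \ref{Theor} (or Corollary \ref{resulsansmonotf}) to the ``monotonized majorant'' $\tilde\Phi(t,y,y')=|\Phi(t,0,0)|+C_\Phi(|y|+|y'|)$. This function is \emph{not} non-decreasing in $y'$ (because of $|y'|$), so it violates (H2)(c), and the exponential change of variable of Corollary \ref{resulsansmonotf} only repairs the lack of monotonicity in $y$, not in $y'$. Worse, the defect is not fixable by choosing a cleverer majorant: a function that is non-decreasing in $y'$ is bounded on $(-\infty,0]$ by its value at $0$, whereas any majorant of a generic Lipschitz $\Phi$ must grow like $|y'|$ as $y'\to-\infty$; hence no Lipschitz, non-decreasing-in-$y'$ majorant of $\Phi$ exists in general. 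The paper avoids this entirely: it obtains $(\bar Y,\bar Z,\bar K)$ by applying Theorem \ref{FP-Snell} of Section 3 directly to the data $(\Phi,\xi,h)$, which requires only the Lipschitz/integrability hypotheses and \eqref{gamma} — no monotonicity of the driver. The subsequent comparison $Y^{n+1}\le\bar Y$ then uses only the monotonicity of $f$ in $y'$ (H3)(i)(c), the domination $f\le\Phi$, and the monotonicity of $h$, the key point being that the penalization term evaluated along $\bar Y$ vanishes since $\bar Y_t\ge h(\bar Y_t,\E[\bar Y_t])\ge h(Y^n_t,\E[Y^n_t])$. You should replace your majorant construction by this direct appeal to Theorem \ref{FP-Snell}.

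A second, smaller issue is the order of operations in your identification step: you write the limiting Snell-envelope representation with $f(s,\hat Y_s,\hat Z_s,\E[\hat Y_s])$ before $\hat Z$ has been produced, and only afterwards extract $\hat Z$ from the Doob--Meyer/martingale-representation decomposition. Because the reward process itself depends on $\hat Z$, you must first establish the uniform a priori bound $\sup_n\E\bigl[\int_0^T|Z^n_s|^2\,ds\bigr]<\infty$ (via It\^o's formula applied to $(Y^n)^2$, using the bounds $Y^0\le Y^n\le\bar Y$ and an estimate on $K^n_T$) and invoke Peng's monotonic limit theorem to obtain a process $Z$ with $Z^n\to Z$ in $L^p(dt\otimes d\P)$ for $p\in[1,2)$; only then can you pass to the limit in the term $\int f(s,Y^n_s,Z^n_s,\E[Y^{n-1}_s])\,ds$ and identify $\hat Y$ as the Snell envelope associated with $(\hat Y,Z)$. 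You flag this as the ``secondary difficulty'' and name the right tool, but the estimate itself is the load-bearing ingredient and needs to be carried out, not just cited.
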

\begin{proof} Let $Y^0=\underbar Y$
where $(\underbar Y,\underbar Z)$ is the solution of the following BSDE of mean field type (which exists according to Theorem 3.1. in \cite{blp09}):
  \begin{equation}
 \label{Eq21}\left\{
    \begin{array}{ll}
      \underbar Y\in \Sc^{2}_c,\quad \underbar Z\in \Hc^{2,d} ;\\
       \underbar Y_{t}=\xi+\int_{t}^{T}f(s,\underbar Y_{s},\underbar Z_{s},\E[\underbar Y_s])
ds-\int_{t}^{T}\underbar Z_{s}\,dB_{s}, \quad t\le T.
    \end{array}
  \right.
\end{equation}
For $n\ge 1$, we define $(Y^n,Z^n)$ as the solution of the
following standard BSDE:
\begin{equation}
\label{Eq22}\left\{
    \begin{array}{lll}
      Y^n\in \Sc^{2}_c, Z^n\in \Hc^{2,d} ;\\
      Y^n_{t}=\xi+\int_{t}^{T}f(s,Y^n_{s},Z^n_{s},\E[Y^{n-1}_s])\,
ds+K^n_T-K^n_t-\int_{t}^{T}Z^n_{s}\,dB_{s},\,\,t\le T,
  \end{array}
  \right.
\end{equation}
where, for $n\ge 1$,  $K^n$ denotes the  process
$$ \begin{array}{c}
K^n_t:=n\int_0^t(Y^n_s-h(Y_s^{n-1},\E[Y_s^{n-1}]))^-ds,\quad t\le T.\end{array}
$$
Finally, let $(\bar Y,\bar Z,\bar K)$ be the unique solution of the following mean-field reflected BSDE:
\begin{equation}
 \label{Eq23}\left\{
    \begin{array}{ll}
\bar Y\in \Sc^{2}_c, \bar Z\in \Hc^{2,d} \mbox{ and }\bar K\in \Sc_{ci}^{2};\\
      \bar Y_{t}=\xi+\int_{t}^{T}\Phi(s,\bar Y_{s},\E[\bar Y_s])\,
ds+ \bar K_{T}-\bar  K_{t}-\int_{t}^{T}\bar Z_{s}\,dB_{s},\,\,t\leq T; \\
     \bar  Y_{t}\geq h(\bar  Y_t,\E[\bar  Y_t]),\,  \forall t \geq T \mbox{ and } \int_0^T ( \bar Y_t-h( \bar Y_t,\E[\bar  Y_t]))d\bar  K_t = 0.
    \end{array}
  \right.
\end{equation}
The triplet of processes $(\bar Y,\bar Z,\bar K)$ solution of \eqref{Eq23} exists by Theorem \ref{FP-Snell}.
\ms

\nd The proof of the theorem will be obtained in the following steps.
\ms

\nd \underline{Step 1}. \begin{equation}\lb{estimgeneral}\mbox{For
any }n\ge 0, \quad Y^0\le Y^{n}\le Y^{n+1}\le \bar Y.\end{equation}

\nd The fact that $Y^0\le Y^{1}$ is obtained by the comparison result by using the standard argument of It\^o's formula with $((Y^0-Y^1)^+)^2$ since the driver of
$Y^1$ is $f(t,y,z,\E[Y^0_t])+(y-\E[Y^0_t]])^+$ which is greater than $f(t,y,z,\E[Y^0_t])$. Actually, we can easily show that $\E[((Y^0-Y^1)^+)^2]=0$, which is the desired result. Suppose that, for some $n\ge 0$, we have $Y^{n}\le Y^{n+1}$. Then we also have  $Y^{n+1}\le Y^{n+2}$ by the comparison theorem of solutions since the mappings $y'\in \R\mapsto f(t,y,z,y')$, $y'\in \R\mapsto h(y,y')$ and $y\in \R\mapsto h(y,y')$ are nondecreasing. Finally, let us show by induction that for any $n\ge 0$, $Y^n\le \bar Y$. For $n=0$, it holds true by the comparison theorem of solutions of mean-field BSDEs since the function $y'\in \R\mapsto f(t,y,z,y')$ is non-decreasing and $d\bar K\ge 0$ (see Theorem \ref{compappen} in the appendix). Next, assume that the property holds true for some $n\ge 0$, i.e. $Y^n\le \bar Y$. But the solution of \eqref{Eq23} is unique, then $(\bar Y,\bar Z,\bar K)=(\tilde Y,\tilde Z,\tilde K)$ where $(\tilde Y,\tilde Z,\tilde K)$ is the unique solution of the following standard BSDE:
\begin{equation}
 \label{Eq232}\left\{
    \begin{array}{ll}
\tilde Y\in \Sc^{2}_c, \tilde Z\in \Hc^{2,d} \mbox{ and } \tilde K\in \Sc_{ci}^{2};\\
      \tilde Y_{t}=\xi+\int_{t}^{T}\Phi(s,\tilde  Y_{s},\E[\bar Y_s])\,
ds+ \tilde  K_{T}-\tilde   K_{t}-\int_{t}^{T}\tilde  Z_{s}\,dB_{s},\,\,t\leq T; \\
    \tilde   Y_{t}\geq h(\bar   Y_t,\E[\bar  Y_t]),\,  \forall t \geq T \mbox{ and } \int_0^T ( \tilde Y_t-h(\bar  Y_t,\E[\bar  Y_t]))d\tilde   K_t = 0.
    \end{array}
  \right.
\end{equation}
Now, by the induction hypothesis and the fact that the mappings,
$y'\in \R\mapsto \Phi(t,y,y')$, $y'\in \R\mapsto f(t,y,z,y')$, $y'\in \R\mapsto h(y,y')$ and $y\in \R\mapsto h(y,y')$ being nondecreasing, we obtain
\begin{align}
f(t,y,z,&\E[Y^n_t])+{(n+1)(y-h(Y^n_t, \E[Y^n_t]))^- }_{|_{(y,z)=(\tilde Y_t,\tilde Z_t)}}\nn\\&
=f(t,\tilde Y_t,\tilde Z_t,\E[Y^n_t])\le f(t,\tilde Y_t,\tilde Z_t,\E[\bar Y_t])\le \Phi(s,\tilde  Y_{s},\E[\bar Y_s])\le \Phi(s,\tilde  Y_{s},\E[\bar Y_s])+d\tilde K_t. \nn
\end{align}
Therefore, by the comparison theorem of the solutions of standard BSDEs,  we obtain $Y^{n+1}\le \tilde Y=\bar Y$, which completes the proof.

\ms
\nd \underline{Step 2}. For any $t\le T$, we set
$Y_t=\lim_{n\rightarrow \infty}Y^n_t$. The process $Y$ is rcll and
there exist a $\mathcal{P}$-measurable process $Z$ such that for any
$p\in [1,2)$, $(Z^n)_{n\ge 0}$ converge to $Z$ in $L^p(dt\otimes
d\P)$. 

\ms
\nd First, note that the process $Y$ exists since by Step 1 the
sequence of processes $(Y^n)_{n\ge 0}$ is non-decreasing and $Y^n\le
\bar Y$. On the other hand since $Y^0$ and $\bar Y$ belongs to
$\Sc^{2}_c$ then 
\begin{equation}\lb{bornyn}
\E[\sup_{t\le T}|Y_t|^2]\le C.
\end{equation}
Furthermore, using It\^o's formula and taking into account the
representation of $Y^n$ similar to \eqref{snellenvelopeyn} we have, for any $t\le T$ and $\eps>0$, 
\begin{align} \nn
(Y^n_{t})^2+\int_{t}^{T}|Z^n_s|^2ds&=\xi^2+2\int_{t}^{T}Y^n_sf(s,Y^n_{s},Z^n_{s},\E[Y^{n-1}_s])
ds+2\int_{t}^{T}Y^n_sdK^n_s-2\int_{t}^{T}Y^n_sZ^n_{s}\,dB_{s}\\
\nn &\le
\xi^2+2\int_{t}^{T}Y^n_s\{f(s,0,0,0)+a(s)Y^n_{s}+b(s)Z^n_{s}+c(s)\E[Y^{n-1}_s])
ds\\&\qquad\qquad +2K^n_T\sup_{s\le T}\{Y^n_s\wedge
h(Y^{n-1}_s,\E[Y^{n-1}_s])\} -2\int_{t}^{T}Y^n_sZ^n_{s}\,dB_{s}\\
\nn &\le
\xi^2+2\int_{t}^{T}Y^n_s\{f(s,0,0,0)+a(s)Y^n_{s}+b(s)Z^n_{s}+c(s)\E[Y^{n-1}_s])
ds\\&\qquad\qquad +\eps(K^n_T)^2+\eps^{-1}(\sup_{s\le
T}\{Y^n_s\wedge
h(Y^{n-1}_s,\E[Y^{n-1}_s])\})^2-2\int_{t}^{T}Y^n_sZ^n_{s}\,dB_{s}
\end{align}
where $a(\cdot)$, $b(\cdot)$ and $c(\cdot)$ are measurable processes bounded by the Lipschitz constant of $f$
$C_f$. Now, taking into account of the
inequalities of Step 1 and the monotonicity property of $h$ to
deduce that, for any $n\ge 1$,  $$ \E[(\sup_{s\le T}\{Y^n_s\wedge
h(Y^{n-1}_s,\E[Y^{n-1}_s])\})^2]\le \E[(\sup_{s\le T}h(\bar
Y_s,\E[\bar Y_s])^2]+\E[(\sup_{s\le T}\{Y^1_s\wedge
h(Y^{0}_s,\E[Y^{0}_s])\})^2].$$ As $$ K^n_T=
Y^n_{0}-\xi-\int_{0}^{T}f(s,Y^n_{s},Z^n_{s},\E[Y^{n-1}_s])\,
ds+\int_{0}^{T}Z^n_{s}\,dB_{s},\,\,t\le T,
$$
then, by using \eqref{estimgeneral}, the inequality $(u+v+w+z)^2\leq
4(u^2+v^2+w^2+z^2)$, the Burkholder-Davis-Gundy inequality and finally by choosing $\eps$ appropriately we obtain
$$
\E[\int_{0}^{T}|Z^n_s|^2ds]\le C \mbox{ and }
\E[\int_{0}^{T}|f(s,Y^n_{s},Z^n_{s},\E[Y^{n-1}_s])|^2 ds ]\le C
$$
where $C$ is a constant independent of $n$. Consequently, in view of  Theorem 2.1 in \cite{pengmono}, we obtain that
\begin{itemize}
\item [(a)] $Y$ is rcll;
\item[(b)] there exist a $\mathcal{P}$-measurable process $Z$ such that for
any $p\in [1,2)$,
$$
\lim_{n\rightarrow \infty}\E[\int_0^T|Z^n_s-Z_s|^pds]=0.
$$
\end{itemize}

\nd \underline{Step 3}. The process $Y$ is continuous and satisfies
\begin{equation}\lb{snellenvyz}
Y_{t}=\underset{\tau \ge
t}{\esssup}\E[\int_t^{\tau}f(s,Y_s,Z_s,\E[Y_s])ds+
h(Y_\tau,\E[Y_t]_{|t=\tau})1_{\{\tau<T\}}+\xi
1_{\{\tau=T\}}|\Fc_{t}].
\end{equation}
For any $n\ge 1$, in view of  the representation \eqref{snellenvelopeyn}, 
we have, for all $t\in [0,T]$,
\begin{align}\lb{snenvyzn}
Y^n_{t}&=\underset{\tau \ge
t}{\esssup}\E[\int_t^{\tau}f(s,Y^{n}_s,Z^n_s,\E[Y^{n-1}_s])ds+Y^n_\tau\wedge
h(Y^{n-1}_\tau,\E[Y^{n-1}_t]_{|t=\tau})1_{\{\tau<T\}}+\xi
1_{\{\tau=T\}}|\Fc_{t}]\\ \nn &=G^n(t)+ H^n(t) \nn
\end{align}
where, for all $t\le T$,
\begin{align}
\nn G^n(t)=\underset{\tau \ge
t}{\esssup}\E[\int_t^{\tau}f(s,Y^{n}_s,Z^n_s,\E[Y^{n-1}_s])ds+Y^n_\tau\wedge
h(Y^{n-1}_\tau,\E[Y^{n-1}_t]_{|t=\tau})1_{\{\tau<T\}}+\xi
1_{\{\tau=T\}}|\Fc_{t}]\\\nn\qquad \qquad -\underset{\tau \ge
t}{\esssup}\E[\int_t^{\tau}f(s,Y_s,Z_s,\E[Y_s])ds+Y^n_\tau\wedge
h(Y^{n-1}_\tau,\E[Y^{n-1}_t]_{|t=\tau})1_{\{\tau<T\}}+\xi
1_{\{\tau=T\}}|\Fc_{t}]\end{align} and
\begin{align}H^n(t)=\underset{\tau \ge
t}{\esssup}\E[\int_t^{\tau}f(s,Y_s,Z_s,\E[Y_s])ds+Y^n_\tau\wedge
h(Y^{n-1}_\tau,\E[Y^{n-1}_t]_{|t=\tau})1_{\{\tau<T\}}+\xi
1_{\{\tau=T\}}|\Fc_{t}].\nn\end{align} 
But, for any $t\le T$,
$$
|G^n(t)|\le
\E[\int_0^{T}|f(s,Y^{n}_s,Z^n_s,\E[Y^{n-1}_s])-f(s,Y_s,Z_s,\E[Y_s])|ds|\Fc_{t}]
$$
since $|\underset{\tau \ge
t}{\esssup}\,\Sigma^1_{t,\tau}-\underset{\tau \ge
t}{\esssup}\,\Sigma^2_{t,\tau}|\le \underset{\tau \ge
t}{\esssup}\,|\Sigma^1_{t,\tau}-\Sigma^2_{t,\tau}|$. Therefore, by
the convergence results stated in Step 2 for $(Y^n)_n$ and $(Z^n)_n$
and using Doob's inequality, the sequence of
processes $(G^n)_{n\ge 0}$ converges to 0 in $\Sc^{p}_c$ for any
$p\in [1,2)$. On the other  hand, using the same argument as in
\eqref{snellenvelopey}, the sequence $(H^n)_{n\ge 1}$
converges increasingly to the following rcll process $H$ defined, f any $t\le T$,  by
\begin{align}H(t)=\underset{\tau \ge
t}{\esssup}\E[\int_t^{\tau}f(s,Y_s,Z_s,\E[Y_s])ds+Y_\tau\wedge
h(Y_\tau,\E[Y_t]_{|t=\tau})1_{\{\tau<T\}}+\xi
1_{\{\tau=T\}}|\Fc_{t}].\nn\end{align} Thus, for any $t\le T$,
$Y_t=H(t)$. Finally,  the continuity of $Y$ is obtained in the same
way as in Step 3 of the previous subsection, since $(\ga,\gb)$ satisfies
\eqref{gamma}. The proof of the claim is now complete. \ms

\nd \underline{Step 4}. For any $t\le T$, we define the continuous
process $K$ by
$$\begin{array}{c}
  K_t=
  Y_{0}-Y_{t}-\int_{0}^{t}f(s,Y_{s},Z_s,\E[Y_s])ds+\int_{0}^{t}Z_{s}\,dB_{s}.\end{array}
$$
The triple of processes $(Y,Z,K)$ is a solution of the mean-field
reflected BSDE \eqref{Eqz} and it is minimal. \ms

\nd The process $K$ is nothing but the limit of the sequence
$(K^n)_n$ and since for any $n$, $K^n$ is non-decreasing then $K$ is
also non-decreasing. On the other hand, by Dini's Theorem $(Y^n)_n$ converges to
$Y$  in $\Sc^{2}_c$ and $(Z^n)_n$ converges to $Z$ in
$L^p(dt\otimes d\P)$ for any $p\in [1,2)$, then, by
Burkholder-Davis-Gundy inequality and Jensen's inequality, $(K^n)_n$
converges in $\Sc^{p}_c$ ($p\in [1,2)$) to $K$. 

\noindent Finally, let us show that the Skorohod condition
$\int_0^T(Y_s-h(Y_{s},\E[Y_s]))dK_s=0$ holds. Consider a subsequence which we still denote by $\{n\}$ such that 
$$
\P\mbox{-a.s.},\quad \lim_{n\to\infty}\sup_{s\le T}|K^n_s-K_s|\rightarrow 0.
$$
As pointed out previously, for any $n\ge 0$, we have
$$\begin{array}{c}
\int_0^T(Y^n_s-Y^n_s\wedge h(Y^n_{s},\E[Y^n_s]))dK^n_s=0.\end{array}
$$
On the other hand 
$$\begin{array}{lll}
\int_0^T(Y^n_s-Y^n_s\wedge h(Y^n_{s},\E[Y^n_s]))dK^n_s\\
\qquad\qquad\qquad =\int_0^T\{(Y^n_s-Y^n_s\wedge h(Y^n_{s},\E[Y^n_s]))-(Y_s-h(Y_{s},\E[Y_s]))\}dK^n_s\\\qquad\qquad\qquad\quad\qquad+\int_0^T(Y_s-h(Y_{s},\E[Y_s]))dK^n_s=A^n_1+A^n_2.
\end{array}$$
But, 
$$|A^n_1|\le \sup_{s\le T}|(Y^n_s-Y^n_s\wedge h(Y^n_{s},\E[Y^n_s]))-(Y_s-h(Y_{s},\E[Y_s]))|\times K^n_T.$$
Thus, $\P$-a.s. $\lim_{n\to\infty}A^n_1 \rightarrow 0$, since $K^n_T$ is bounded thanks to the uniform convergence of $(K^n)_n$ to $K$, the uniform convergence of $(Y^n)_n$ to $Y$, 
$\P$-a.s. and in $\Sc^{2}_c$, and finally the fact that $h$ is Lipschitz. On the other hand, in view of Helly's Convergence Theorem (see \cite{kolmo}, pp. 370),  we have
$$
\begin{array}{lll}
\P\mbox{-a.s.},\quad \underset{n\to\infty}{\lim}A^n_2=\int_0^T(Y_s-h(Y_{s},\E[Y_s]))dK_s.\end{array}
$$
Therefore, $\int_0^T(Y_s-h(Y_{s},\E[Y_s]))dK_s=0$ which means that the Skorohod condition is satisfied and then $(Y,Z,K)$ is a solution of \eqref{Eqz}. \ms

Lastly, the minimality property follows in a similar manner as in Theorem \ref{Theor} above. 
\end{proof}

\begin{remark}
Since $(Y^n)_n$ converges to $Y$ in $\Sc^{2}_c$, this entails the convergence of $(Z^n)_n$ to $Z$ in 
$\Hc^{2,d}$.
\end{remark}

\section{Appendix: Comparison of solutions of BSDEs of mean-field
type}

For sake of completeness, we recall the following result related to comparison of solutions of
mean-field BSDEs given in (\cite{blp09}, Theorem 3.2) in a more
general setting. We present here the form which we need it throughout this paper.
\begin{Theorem}[\cite{blp09}, Theorem 3.2]\label{compappen} Let us consider the two following two BSDEs of mean-field type: For any $t\le T$,
\begin{equation*}
 \label{Eqcomp1}
    \begin{array}{ll}
    Y_{t}=\xi+\int_{t}^{T}f(s,Y_{s},Z_s,\E[Y_s])\,
ds+ \int_{t}^{T}Z_{s}\,dB_{s}
    \end{array}
  \end{equation*}
 and
 \begin{equation}
 \label{Eqcomp2}
    \begin{array}{ll}
    Y'_{t}=\xi'+\int_{t}^{T}f'(s,Y'_{s},Z'_s,\E[Y'_s])\,
ds+ \int_{t}^{T}Z'_{s}\,dB_{s}.
    \end{array}
 \end{equation}
We assume that the mappings $f$ and $f'$  satisfy (H3)-(i) (a)-(b) and
$\xi,\xi'$ are square integrable. If
\begin{itemize}
\item[(a)] $\xi \le \xi'$,

\item[(b)] $f(t,Y'_t,Z'_t,\E[Y'_t])\leq f'(t,Y'_t,Z'_t,\E[Y'_t])$,

\item [(c)] $f$ is non-decreasing w.r.t. $y'$.

\end{itemize}

\nd Then $Y\le Y'$.
\end{Theorem}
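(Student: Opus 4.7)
The plan is to follow the classical comparison strategy adapted to the mean-field setting, the argument being essentially that of \cite{blp09}. Set $\delta Y := Y - Y'$, $\delta Z := Z - Z'$ and $\delta \xi := \xi - \xi'$, so that $\delta \xi \le 0$ by (a) and
$$
\delta Y_t = \delta \xi + \int_t^T \Delta f(s)\, ds - \int_t^T \delta Z_s\, dB_s,
$$
where $\Delta f(s) := f(s,Y_s,Z_s,\E[Y_s]) - f'(s,Y'_s,Z'_s,\E[Y'_s])$. The key decomposition is
$$
\Delta f(s) = \underbrace{\bigl(f(s,Y_s,Z_s,\E[Y_s]) - f(s,Y'_s,Z'_s,\E[Y'_s])\bigr)}_{=: A_s} + \underbrace{\bigl(f(s,Y'_s,Z'_s,\E[Y'_s]) - f'(s,Y'_s,Z'_s,\E[Y'_s])\bigr)}_{=: B_s},
$$
where $B_s \le 0$ by assumption (b). Using the Lipschitz continuity of $f$ in (H3)(i)(b), I would linearize $A_s$ as $A_s = \alpha_s \delta Y_s + \beta_s \delta Z_s + \gamma_s \E[\delta Y_s]$ for some progressively measurable coefficients $\alpha, \beta, \gamma$ bounded by $C_f$, with the crucial feature that $\gamma_s \ge 0$ a.s.\ thanks to the monotonicity assumption (c) on $f$ with respect to its fourth argument.

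Next I would apply Tanaka--Meyer's formula to $((\delta Y_t)^+)^2$, obtaining
$$
((\delta Y_t)^+)^2 + \int_t^T \ind_{\{\delta Y_s > 0\}}|\delta Z_s|^2\, ds = ((\delta\xi)^+)^2 + 2\int_t^T (\delta Y_s)^+ \Delta f(s)\, ds - 2\int_t^T (\delta Y_s)^+ \delta Z_s\, dB_s.
$$
Since $\delta \xi \le 0$, the first term on the right vanishes. Substituting the linearization of $A_s$ and using $B_s \le 0$ gives, after absorbing the $\beta_s \delta Z_s$ cross term via Young's inequality against the quadratic term, and taking expectations to kill the stochastic integral,
$$
\E[((\delta Y_t)^+)^2] \le C\int_t^T \E[((\delta Y_s)^+)^2]\, ds + 2\int_t^T \E\bigl[\gamma_s (\delta Y_s)^+ \E[\delta Y_s]\bigr]\, ds,
$$
with $C$ depending only on $C_f$. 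The non-local term is handled by noting that, since $\gamma_s \ge 0$,
$$
\gamma_s (\delta Y_s)^+ \E[\delta Y_s] \le \gamma_s (\delta Y_s)^+ \E[(\delta Y_s)^+],
$$
so that Cauchy--Schwarz and the bound $|\gamma_s|\le C_f$ yield
$$
\E\bigl[\gamma_s (\delta Y_s)^+ \E[(\delta Y_s)^+]\bigr] \le C_f \,\E[(\delta Y_s)^+]^2 \le C_f\, \E[((\delta Y_s)^+)^2].
$$

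Putting the pieces together delivers a Gronwall-type estimate $\E[((\delta Y_t)^+)^2] \le C'\int_t^T \E[((\delta Y_s)^+)^2]\, ds$, from which Gronwall's lemma (backward in time) forces $\E[((\delta Y_t)^+)^2] \equiv 0$, i.e.\ $Y_t \le Y'_t$ a.s.\ for every $t$, and then for all $t$ by continuity. The genuinely delicate step here is the handling of the mean-field coupling $\gamma_s \E[\delta Y_s]$: without the monotonicity assumption (c), the coefficient $\gamma_s$ could take both signs and the inequality $\gamma_s \E[\delta Y_s]\le \gamma_s \E[(\delta Y_s)^+]$ would fail, so that one cannot close the Gronwall loop on $\E[((\delta Y)^+)^2]$ alone. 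Assumption (c) is thus exactly what makes the standard positive-part technique survive in the mean-field framework.
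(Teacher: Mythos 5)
Your proposal is correct, and it is essentially the standard argument: the paper itself gives no proof of this statement (it is quoted verbatim from Buckdahn--Li--Peng \cite{blp09}, Theorem 3.2), and your proof --- linearization of the driver, It\^o--Tanaka applied to $((\delta Y)^+)^2$, the monotonicity (c) forcing $\gamma_s\ge 0$ so that the mean-field term is controlled via $\E[\delta Y_s]\le \E[(\delta Y_s)^+]$ and Jensen, then Gronwall --- is precisely the one used there. The only cosmetic remarks are that the sign in front of the stochastic integral in the paper's display is a typo relative to your (standard) convention, and that the final bound $\E[(\delta Y_s)^+]^2\le\E[((\delta Y_s)^+)^2]$ is Jensen rather than Cauchy--Schwarz, though both apply.
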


\begin{Remark} In this theorem, we do not need that $f'$ satisfies
(H3)-(i) (a)-(b) but only the existence of the solution $(Y',Z')$ of
\eqref{Eqcomp2}.
\end{Remark}

\begin{bibdiv}
\begin{biblist}

\bib{briandetal}{article}{
  title={BSDEs with mean reflection},
  author={Briand, Philippe},
  author={ Elie, Romuald},
  author={ Hu, Ying},
  journal={The Annals of Applied Probability},
  volume={28},
  number={1},
  pages={482--510},
  year={2018},
  publisher={Institute of Mathematical Statistics}
}
\bib{briandetal2}{article}{
  title={BSDEs with mean reflection},
  author={Briand, Philippe},
  author={Delyon, Bernard},
  author={Hu, Ying},
  author={Paroux, Etienne},
  author={Stoica, Lucretiu},
  journal={Stochastic Processes and their Applications},
  volume={108},
  number={1},
  pages={109-129},
  year={2003},
  publisher={Elsevier}
}
\bib{blp09}{article}{
  title={Mean-field backward stochastic differential equations and related partial differential equations},
  author={Buckdahn, Rainer},
  author= {Li, Juan},
  author={Peng, Shige},
  journal={Stochastic Processes and their Applications},
  volume={119},
  number={10},
  pages={3133--3154},
  year={2009},
  publisher={Elsevier}
}

\bib{cdd14}{article}{
  title={Reserve-dependent benefits and costs in life and health insurance contracts},
  author={Christiansen, Marcus},
  author={ Denuit, Michel M},
  author={ Dhaene, Jan},
  journal={Insurance: Mathematics and Economics},
  volume={57},
  pages={132--137},
  year={2014},
  publisher={Elsevier}
}

\bib{Li14}{article}{
  title={Reflected mean-field backward stochastic differential equations. Approximation and associated nonlinear PDEs},
  author={Li, Juan},
  journal={Journal of Mathematical Analysis and Applications},
  volume={413},
  pages={47--68},
  year={2014},
  publisher={Elsevier}
}

\bib{dm}{book}{
  title={Probabilities and potential B, Chapter V to VIII},
  author={Dellacherie, C},
  author={Meyer, P A},
  year={1982},
  publisher={North-Holland Amsterdam}
}

\bib{ry}{book}{
  title={Continuous martingales and Brownian motion },
  author={Revuz, Daniel},
  author={Yor, Marc},
  year={2013},
  publisher={Springer Science \& Business Media}

}

\bib{bl14}{article}{
  title={Risk aggregation and stochastic claims reserving in disability insurance},
  author={Djehiche, Boualem},
  author={L{\"o}fdahl, Bj{\"o}rn},
  journal={Insurance: Mathematics and Economics},
  volume={59},
  pages={100--108},
  year={2014},
  publisher={Elsevier}
}
\bib{djehmdpop}{article}{,
  title={A finite horizon optimal multiple switching problem},
  author={Djehiche, Boualem},
  author={Hamadene, Said},
  author={Popier, Alexandre},
  journal={SIAM Journal on Control and Optimization},
  volume={48},
  number={4},
  pages={2751--2770},
  year={2009},
  publisher={SIAM}
}

\bib{duffie1}{article}{
  title={Stochastic differential utility},
  author={Duffie, Darrell},
  author={Epstein, Larry G},
  journal={Econometrica: Journal of the Econometric Society},
  pages={353--394},
  year={1992},
  publisher={JSTOR}
}
\bib{duffie2}{article}{
  title={Asset pricing with stochastic differential utility},
  author={Duffie, Darrell},
  author={Epstein, Larry G},
  journal={The Review of Financial Studies},
  volume={5},
  number={3},
  pages={411--436},
  year={1992},
  publisher={Oxford University Press}
}

\bib{elkaroui2}{article}{
  title={Backward stochastic differential equations in finance},
  author={El Karoui, Nicole},
  author={Peng, Shige},
  author={ Quenez, Marie Claire},
  journal={Mathematical finance},
  volume={7},
  number={1},
  pages={1--71},
  year={1997},
  publisher={Wiley Online Library}
}

\bib{elkaroui1}{article}{
  title={Reflected solutions of backward SDE's, and related obstacle problems for PDE's},
  author={El Karoui, Nicole},
  author={ Kapoudjian, Christophe},
  author={Pardoux, {\'E}tienne},
  author={Peng, Shige},
  author={Quenez, Marie-Claire},
  journal={the Annals of Probability},
  volume={25},
  number={2},
  pages={702--737},
  year={1997},
  publisher={Institute of Mathematical Statistics}
}

\bib{hmd2}{article}{,
  title={Reflected BSDE's with discontinuous barrier and application},
  author={Hamadene, Said},
  journal={Stochastics: An International Journal of Probability and Stochastic Processes},
  volume={74},
  number={3-4},
  pages={571--596},
  year={2002},
  publisher={Taylor \& Francis}
}

\bib{kolmo}{book}{
title={Introductory Real Analysis (translated and edited by Richard A. Silverman) Prentice-Hall},
  author={Kolmogorov, AN},
  author={Fomin, SV},
  journal={Inc., Englewood Cliffs, New Jersey},
  year={1970}
}

\bib{norberg1}{article}{
  title={Reserves in life and pension insurance},
  author={Norberg, Ragnar},
  journal={Scandinavian Actuarial Journal},
  volume={1991},
  number={1},
  pages={3--24},
  year={1991},
  publisher={Taylor \& Francis}
}
\bib{norberg2}{article}{
  title={Hattendorff's theorem and Thiele's differential equation generalized},
  author={Norberg, Ragnar},
  journal={Scandinavian Actuarial Journal},
  volume={1992},
  number={1},
  pages={2--14},
  year={1992},
  publisher={Taylor and Francis}
}
\bib{parpeng90}{article}{
title={Adapted solution of a backward stochastic differential equation},
  author={Pardoux, Etienne},
  author={Peng, Shige},
  journal={Systems \& Control Letters},
  volume={14},
  number={1},
  pages={55--61},
  year={1990},
  publisher={Elsevier}
}
\bib{pengmono}{article}{
  title={Monotonic limit theorem of BSDE and nonlinear decomposition theorem of Doob--Meyers type},
  author={Peng, Shige},
  journal={Probability theory and related fields},
  volume={113},
  number={4},
  pages={473--499},
  year={1999},
  publisher={Springer}
}

\bib{HamJea}{article}{
  title={On the starting and stopping problem: application in reversible investments},
  author={Hamadene, Said}
  author={Jeanblanc, Monique}
  journal={Math. Oper. Res.},
  volume={32},
  number={1},
  pages={182--192},
  year={2007},
}

\bib{Ber}{article}{
  title={Optimal stopping in Mean Field Games, an obstacle problem approach},
  author={Bertucci, Charles}
  journal={J; Math. pures et appliquees},
  volume={120},
  pages={165--194},
  year={2018},
}

\bib{EliKha}{article}{
  title={Adding constraints to BSDEs with Jumps : an alternative to multidimensional reflections},
  author={Elie, Romuald}
  author={Kharroubi, Idris}
  journal={ESAIM probability and statistics},
  volume={18},
  number={},
  pages={233-250},
  year={2014},
}

\bib{CviKar}{article}{
  title={Backward stochastic differential equations with reflection and Dynkin games},
  author={Cvitanic, Jaksa}
  author={Karatzas, Ioannis},
  journal={Ann. Probab.},
  volume={24},
  number={4},
  pages={2024-2056},
  year={1996},
}

\bib{CviKarSon}{article}{
  title={Backward stochastic differential equations with constraints on the gains-process},
  author={Cvitanic, Jaksa}
  author={Karatzas, Ioannis}
  author={Soner, Mete}
  journal={Ann. Probab.},
  volume={26},
  number={4},
  pages={1522-1551},
  year={1998},
}

\bib{BouEliRev}{article}{
  title={BSDEs with weak terminal condition},
  author={Bouchard, Bruno}
  author={Elie, Romuald}
  author={Reveillac, Anthony}
  journal={Ann. Probab.},
  volume={43},
  number={2},
  pages={572-604},
  year={2015},
}

\bib{Dumetal}{article}{
  title={BSDEs with weak reflections and partial hedging of American options},
  author={Dumitrescu, Roxana}
  author={Elie, Romuald},
  author={Sabbagh, Wissal},
  author={Zhou, Chao},
  journal={Arxiv:1708.05957},
  volume={},
  number={},
  pages={},
  year={2017},
}

\bib{ChaRic}{article}{
  title={Obliquely reflected BSDEs},
  author={Chassagneux, Jean-Francois}
  author={Richou, Adrien},
  journal={Arxiv:1710.08989},
  volume={},
  number={},
  pages={},
  year={2017},
}

\bib{matoussi2013}{article}{
  title={BSDEs with weak terminal condition},
  author={Matoussi, Anis}
  author={Possamai, Dylan}
  author={Zhou, Chao}
  journal={Ann. Applied Probab.},
  volume={12},
  number={6},
  pages={2420-2457},
  year={2013},
}

\bib{ry}{book}{
  title={Continuous martingales and Brownian motion },
  author={Revuz, Daniel},
  author={Yor, Marc},
  year={2013},
  publisher={Springer Science \& Business Media}
}
\end{biblist}
\end{bibdiv}

\end{document}